\documentclass[11pt]{amsart}

\usepackage[foot]{amsaddr}



\usepackage{amssymb,amsfonts,amsmath,graphicx,verbatim,eufrak,amsthm,calc}
\usepackage{color}
\usepackage{xcolor}
\usepackage{newfloat}
\usepackage{hyperref}
\usepackage{faktor}

\usepackage{tikz}
\hypersetup{
hidelinks,backref=true,pagebackref=true,hyperindex=true,
    colorlinks=true,       
    linkcolor=violet,          
    citecolor=green,        
    filecolor=magenta,      
    urlcolor=cyan           
}

\setlength{\parindent}{0pt} 
\setlength{\parskip}{10pt} 
\addtolength{\textwidth}{2cm} %
\addtolength{\hoffset}{-1cm}  %


\colorlet{genial}{black} 
\colorlet{genialsol}{black}

\makeatletter

\newtheoremstyle{genialnumbox}
{7pt}
{7pt}
{\normalfont}
{}
{\small\bf\sffamily\color{genial}}
{\;}
{0.25em}
{%
{\small\sffamily\color{genial}\thmname{#1}}%
{\nobreakspace\thmnumber{\@ifnotempty{#1}{}\@upn{#2}}}
\thmnote{{\nobreakspace\the\thm@notefont\sffamily\bfseries\color{black}\nobreakspace(#3)}} 
}

\newtheoremstyle{blacknumex}
{7pt}
{7pt}
{\normalfont}
{} 
{\small\bf\sffamily}
{\;}
{0.25em}
{%
{\small\sffamily\color{genial}\thmname{#1}}%
{\nobreakspace\thmnumber{\@ifnotempty{#1}{}\@upn{#2}}}
\thmnote{{\nobreakspace\the\thm@notefont\sffamily\bfseries\color{black}\nobreakspace(#3)}} 
}

\newtheoremstyle{blacknumbox} 
{7pt}
{7pt}
{\normalfont}
{}
{\small\bf\sffamily}
{\;}
{0.25em}
{%
{\small\sffamily\color{genial}\thmname{#1}}%
{\nobreakspace\thmnumber{\@ifnotempty{#1}{}\@upn{#2}}}
\thmnote{{\nobreakspace\the\thm@notefont\sffamily\bfseries\color{black}\nobreakspace(#3)}} 
}

\newtheoremstyle{genialnum}
{7pt}
{7pt}
{\normalfont}
{}
{\small\bf\sffamily\color{genial}}
{\;}
{0.25em}
{%
{\small\sffamily\color{genial}\thmname{#1}}%
{\nobreakspace\thmnumber{\@ifnotempty{#1}{}\@upn{#2}}}
\thmnote{{\nobreakspace\the\thm@notefont\sffamily\bfseries\color{black}\nobreakspace(#3)}} 
}

\makeatother


\RequirePackage[framemethod=default]{mdframed} 

\newmdenv[skipabove=7pt,
skipbelow=7pt,
rightline=false,
leftline=false,
topline=false,
bottomline=false,
backgroundcolor=black!5,
linecolor=genial,
innerleftmargin=5pt,
innerrightmargin=5pt,
innertopmargin=10pt,
leftmargin=0cm,
rightmargin=0cm,
innerbottommargin=10pt]{tBox}

\newmdenv[skipabove=7pt,
skipbelow=7pt,
rightline=false,
leftline=false,
topline=false,
bottomline=false,
backgroundcolor=genial!10,
linecolor=genial,
innerleftmargin=5pt,
innerrightmargin=5pt,
innertopmargin=5pt,
innerbottommargin=5pt,
leftmargin=0cm,
rightmargin=0cm,
linewidth=4pt]{eBox}	

\newmdenv[skipabove=7pt,
skipbelow=7pt,
rightline=false,
leftline=true,
topline=false,
bottomline=false,
linecolor=genial!50,
innerleftmargin=5pt,
innerrightmargin=5pt,
innertopmargin=5pt,
leftmargin=0cm,
rightmargin=0cm,
linewidth=4pt,
innerbottommargin=5pt]{dBox}	

\newmdenv[skipabove=7pt,
skipbelow=7pt,
rightline=false,
leftline=false,
topline=false,
bottomline=false,
linecolor=gray,
backgroundcolor=black!5,
innerleftmargin=5pt,
innerrightmargin=5pt,
innertopmargin=5pt,
leftmargin=0cm,
rightmargin=0cm,
linewidth=4pt,
innerbottommargin=5pt]{cBox}

\newmdenv[skipabove=7pt,
skipbelow=7pt,
rightline=false,
leftline=false,
topline=false,
bottomline=false,
linecolor=gray,
backgroundcolor=black!5,
innerleftmargin=5pt,
innerrightmargin=5pt,
innertopmargin=5pt,
leftmargin=0cm,
rightmargin=0cm,
linewidth=4pt,
innerbottommargin=5pt]{pBox}

\newmdenv[skipabove=7pt,
skipbelow=7pt,
rightline=false,
leftline=false,
topline=false,
bottomline=false,
linecolor=genialsol,
innerleftmargin=5pt,
innerrightmargin=5pt,
innertopmargin=0pt,
leftmargin=0cm,
rightmargin=0cm,
linewidth=4pt,
innerbottommargin=0pt]{solBox}	


\theoremstyle{genialnumbox}
\newtheorem{thm1}{Theorem}[section]
\newtheorem{ithm1}[thm1]{$\star$ THEOREM}
\newtheorem{ques1}[thm1]{Question}
\newtheorem{conj1}[thm1]{Conjecture}

\theoremstyle{blacknumex}
\newtheorem{exer}[thm1]{Exercise}
\newtheorem{exer*}[thm1]{$\ast$ Exercise}

\theoremstyle{blacknumbox}
\newtheorem{dfn1}[thm1]{Definition}

\theoremstyle{genialnum}
\newtheorem{cor1}[thm1]{Corollary}
\newtheorem{prop1}[thm1]{Proposition}
\newtheorem{lem1}[thm1]{Lemma}

\newtheorem{exm1}[thm1]{Example}


\newenvironment{theorem}{\paragraph{ } \begin{tBox}\begin{thm1}}{\end{thm1}\end{tBox}}

\newenvironment{exe*}{\paragraph{ } \begin{eBox}\begin{exer*}}{\hfill{\color{genial}
\ensuremath{\diamond\diamond\diamond}}\end{exer*}\end{eBox}}
\newenvironment{definition}{\paragraph{ } \begin{dBox}\begin{dfn1}}{\end{dfn1}\end{dBox}}	
\newenvironment{example}{\paragraph{ } \begin{exm1}}{\hfill{\tiny%
\ensuremath{\bigtriangleup\bigtriangledown\bigtriangleup}}\end{exm1}}
\newenvironment{corollary}{\paragraph{ } \begin{cBox}\begin{cor1}}{\end{cor1}\end{cBox}}	
\newenvironment{ques}{\paragraph{ } \begin{cBox}\begin{ques1}}{\end{ques1}\end{cBox}}	
\newenvironment{conj}{\paragraph{ } \begin{cBox}\begin{conj1}}{\end{conj1}\end{cBox}}	

\newenvironment{proposition}{\paragraph{ } \begin{pBox}\begin{prop1}}{\end{prop1}\end{pBox}}	
\newenvironment{lemma}{\paragraph{ } \begin{pBox}\begin{lem1}}{\end{lem1}\end{pBox}}

\newenvironment{lem*}[1]{\vspace{1ex}\noindent
{\bf Lemma* (#1).} [restatement]  \hspace{0.5em} \em }{ }

\newenvironment{thm*}[1]{\begin{cBox}
\vspace{1ex}\noindent 
{\bf Theorem* (#1).} [restatement]  \hspace{0.5em} }{\end{cBox}}

\theoremstyle{genialnum}

\newtheorem*{clm*}{Claim}

\newenvironment{sol}%
{\begin{solBox}
\par \noindent 
\scriptsize
{\bf Solution to ex:{\color{blue} \arabic{exer}}.}  {\color{red} \ \  :( } \\ }%
{\hfill {\color{blue} :) $\checkmark$} \end{solBox}}

\newcommand{\ENDEXER}{
{\expandafter\comment}
{\expandafter\endcomment}
}


\newtheorem{remark}[thm1]{Remark}



\makeatletter
\renewcommand{\@seccntformat}[1]{\llap{\textcolor{genial}{\csname the#1\endcsname}\hspace{1em}}}                    
\renewcommand{\section}{\@startsection{section}{1}{\z@}
{-4ex \@plus -1ex \@minus -.4ex}
{1ex \@plus.2ex }
{\normalfont\large\sffamily\bfseries}}
\renewcommand{\subsection}{\@startsection {subsection}{2}{\z@}
{-3ex \@plus -0.1ex \@minus -.4ex}
{0.5ex \@plus.2ex }
{\normalfont\sffamily\bfseries}}
\renewcommand{\subsubsection}{\@startsection {subsubsection}{3}{\z@}
{-2ex \@plus -0.1ex \@minus -.2ex}
{.2ex \@plus.2ex }
{\normalfont\small\sffamily\bfseries}}                        
\renewcommand\paragraph{\@startsection{paragraph}{4}{\z@}
{-2ex \@plus-.2ex \@minus .2ex}
{.1ex}
{\normalfont\small\sffamily\bfseries}}



\makeatother



\newcommand{\IP}[1]{\left\langle #1 \right\rangle}

\newcommand{\Integer}{\mathbb{Z}}

\newcommand{\Z}{\Integer}
\newcommand{\N}{\mathbb{N}}
\newcommand{\Q}{\mathbb{Q}}
\newcommand{\R}{\mathbb{R}}

\newcommand{\ie}{{\em i.e.\ }}
\newcommand{\eg}{{\em e.g.\ }}






\def\squareforqed{\hbox{\rlap{$\sqcap$}$\sqcup$}}
\def\qed{\ifmmode\squareforqed\else{\unskip\nobreak\hfil
\penalty50\hskip1em\null\nobreak\hfil\squareforqed
\parfillskip=0pt\finalhyphendemerits=0\endgraf}\fi}


\newcommand{\ignore}[1]{ }


\newcommand{\p}{\partial}

\newcommand{\vphi}{\varphi}

\newcommand{\F}{\mathbb{F}}

\newcommand{\AND}{\qquad \textrm{and} \qquad}

\newcommand{\mbf}[1]{\mathbf{#1}}

\newcommand{\define}[1]{\textbf{#1}}


\newcommand{\Gr}{\mathsf{Gr}}


\title{Detecting virtual homomorphisms via Banach metrics}
\author{Liran Ron-George}
\author{Ariel Yadin}
\address{Department of Mathematics, Ben-Gurion University of the Negev}
\email{lirar@post.bgu.ac.il, yadina@bgu.ac.il}
\thanks{We thank C.\ Bodart, Y.\ Glasner and A.\ Karlsson for useful discussions and insights. 
We also thank an anonymous referee for spotting a mistake
in a previous version of Lemma \ref{lem:Cayley graphs for VA}.
Research supported by 
the Israel Science Foundation, grant no.\ 954/21.
The first author 
is also partially supported by the Israel Science Foundation, grant no.\ 1175/18}

\begin{document}

\begin{abstract}
We introduce the notion of {\em Banach metrics} on finitely generated infinite groups. This extends the notion
of a Cayley graph (as a metric space).
Our motivation comes from trying to detect the existence of virtual homomorphisms into $\Z$.
We show that detection of such homomorphisms through metric functional boundaries of Cayley graphs isn't always possible.
However, we prove that 
it is always possible to do so through a metric functional boundary of some Banach metric on the group.
\end{abstract}

\maketitle

\section{Introduction}

Gromov's theorem regarding groups of polynomial growth \cite{Gromov81} is widely considered 
a cornerstone of geometric group theory.  In that paper Gromov proved that any finitely generated group
of polynomial growth is virtually nilpotent.  The proof has two main stages to it.  The first, is to show that in any 
finitely generated group of polynomial growth there is a finite index subgroup with a surjective homomorphism onto 
the additive group of integers. (This is usually done by constructing a representation of the group.) 
We call such a homomorphism a {\em virtual homomorphism} for short, see below, Definition \ref{dfn:vir hom}.
(``Virtual'' because the homomorphism is only defined on a finite index subgroup.)
The second stage in Gromov's proof is an induction on the degree of polynomial growth showing that such groups must be virtually nilpotent.

This was not the first time growth of finitely generated groups was studied.  Already in 1968 Milnor and Wolf considered 
the polynomial growth setting for solvable groups in \cite{Milnor68, Wolf68}, proving that any finitely generated
solvable group is either of exponential growth or virtually nilpotent.  (Finitely generated nilpotent groups 
were shown to have polynomial growth by Wolf in \cite{Wolf68}, 
and later Bass and Guivarc'h \cite{Bass72, Guivarch73} computed the exact 
degree of polynomial growth of a finitely generated nilpotent group.)
In the same year 1968, Milnor \cite[Problem 5603]{advancedproblems} asked if there exist finitely generated groups
that are not of exponential growth and not of polynomial growth (the so called groups of {\em intermediate growth}).
This was finally answered affirmatively by Grigorchuk \cite{Grigorchuk80, Grigorchuk84}.
Grigorchuk also conjectured \cite{Grigorchuk90} that there is a ``gap'' in the 
possible growth functions of finitely generated groups; 
namely, all finitely generated groups of small enough growth must actually be of polynomial growth, 
see Conjecture \ref{conj:gap} for a precise statement.

One naive thought on how to attack Grigorchuk's gap conjecture would be to reproduce Gromov's strategy:
prove that any finitely generated group of small growth admits a finite index subgroup with a surjective homomorphism onto $\Z$, and then use some 
sort of induction argument.  
Detecting surjective homomorphisms onto $\Z$ provides a lot of information about a group.
If $G/N \cong \Z$ for some normal subgroup $N \lhd G$, then it is not difficult to see that $G \cong \Z \ltimes N$,
where $\Z$ acts on $N$ by some automorphism of $N$. (This may be compared to the Gromoll splitting theorem \cite{Gromoll},
in a different context.)

We are thus motivated to try and understand how to find virtual homomorphisms on groups.
One suggestion by A.\ Karlsson \cite{karlsson2008ergodic} was to consider the action of the group $G$ on a boundary of the {\em 
metric-functional compactification} of $G$.  Elements of this compactification are functions from $G$ into $\Z$, 
and a finite orbit for the canonical action provides a virtual homomorphism.  This will be explained precisely in 
Section \ref{scn:action on boundary}.

Naively one may wish to consider compactifications of Cayley graphs of the group, since these metric spaces are 
the ones used to measure growth, and intimately connect the geometric and algebraic properties of the group.
However, we show in Corollary \ref{cor:free group no detection} 
that on the free group, although there exist surjective homomorphisms onto $\Z$,
the metric-functional  boundaries of Cayley graphs of the free group never contain a finite orbit.

We therefore extend the notion of Cayley graphs to a broader class of metric spaces on a group, which we dub {\em Banach metrics}
(see Definition \ref{dfn:Banach metric}).  These are quasi-isometric to Cayley graphs, so still capture the correct geometry, 
but are general enough metric spaces so that the metric-functionals can still detect virtual homomorphisms.
Our main result, Theorem \ref{thm:detection}, states that a finitely generated infinite group $G$ admits a virtual homomorphism 
if and only if there exists some Banach metric on $G$ such that its metric-functional boundary contains a finite orbit (and therefore
functions in this orbit are virtual homomorphisms).

Another issue with working only with Cayley graphs is that 
this notion is not rich enough for some basic operations 
in geometric group theory.   Specifically, 
the restriction of the metric to subgroups, even to those of finite index, 
does not typically result in a Cayley graph.  
In contrast, we show in Theorem \ref{thm:finite index BM} that the restriction of a Banach metric to a finite index subgroup
always results in a Banach metric on that subgroup.

Banach metrics are much more flexible than Cayley graphs, as can be seen by the construction in Lemma \ref{lem:BM construction}.
The above mentioned results, Theorems \ref{thm:detection} and \ref{thm:finite index BM}, motivate further exploration of such metrics and their 
connection to growth in small groups.
To summarize the comparison between Banach metrics and Cayley graphs:
\begin{itemize}
\item Banach metrics pass to finite index subgroups whereas Cayley graphs do not.
\item Cayley graphs cannot always ``detect'' virtual homomorphisms, however Banach metrics always do.
\item Banach metrics are much more flexible, which may be an advantage for creatively constructing useful ones.
\end{itemize}

We now move to precisely define the above notions and state our results.

\section{Background}

\subsection{Metric-functionals}

Let $(X,d)$ be a metric space with a base point $x_0 \in X$ and denote $L(X,d)$ the set of all functions $h:X \to \mathbb{R}$ such that $h$ is $1$-Lipschitz (i.e. $|h(x)-h(y)| \leq d(x,y)$ for all $x,y \in X$) and $h(x_0)=0$. Equip $L(X,d)$ with the topology of pointwise convergence 
and note that $L(X,d)$ is compact by Tychonoff's theorem. 
The set $X$ embeds into $L(X,d)$ by identifying $x \in X$ with the so called {\em Busemann function} $b_x:X \to \mathbb{R}$ given by $b_x(y)=d(x,y)-d(x,x_0)$. We denote the closure of $\{b_x \ | \  x \in X\}$ in $L(X,d)$ by $\overline{(X,d)}$ and define the \define{metric-functional boundary} of $(X,d)$ to be
$$\partial (X,d)=\overline{(X,d)} \setminus \{b_x \ | \  x \in X\}.$$
The elements of $\p (X,d)$ are called {\em metric-functionals}, and they play a role corresponding to linear functionals, but on 
general metric spaces that do not afford a linear structure.  See \eg \cite{karlsson2021linear, karlsson2024metric} and references therein.

If we replace the above topology on $L(X,d)$ with uniform convergence on compacts, we would arrive at an analogous compact space,
which has been considered extensively (see \eg 
\cite{arosio2024horofunction, busemann2005geometry,
gromov1981hyperbolic,
karlsson2001non,
karlsson2021linear,  karlsson2021hahn, 
karlsson2024metric}, and many other texts).  
In this case elements in the boundary are sometimes called {\em horofunctions}.
Since we will always consider cases where $X$ is countable and discrete, there is no distinction in this paper 
between horofunctions and metric functionals.

\subsection{Cayley graphs}

Let $S$ be a finite symmetric generating set for a group $G$.
That is, $G = \IP{S}$, $|S| < \infty$, $S=S^{-1}$.
We consider the {\em Cayley graph} of $G$ with respect to $S$.
This is a graph whose vertices are the elements of $G$, and edges are given by the relation $x \sim y \iff x^{-1} y \in S$.
Since $S$ is symmetric this defines a graph, denoted $\Gamma(G,S)$, 
and therefore a metric space, where the metric is the graph distance
(which is incidentally the word metric with respect to $S$).
If $d_S$ denotes the graph metric, then it is easy to see that $d_S$ is {\em left-invariant}, \ie $d_S(zx,zy) = d_S(x,y)$ for all $x,y,z \in G$.
It is convenient to use the notation $|x|_S = d_S(x,1)$.

Recall that two metric spaces $(X,d), (Y,\rho)$ are \define{quasi-isometric}
if there exists a \define{quasi-isometry} $\vphi : X \to Y$. This means that
there exists $C>0$ such that 
for any $y \in Y$ there exists $x \in X$ 
such that $\rho(\vphi(x), y) \leq C$ and 
also for any $x,x' \in X$ 
$$ C^{-1} d(x,x') - C \leq \rho ( \vphi(x), \vphi(x') ) \leq C d(x,x') + C $$
It is a fact that quasi-isometries provide an equivalence relation between metric spaces, see \eg \cite[Chapter 3]{Gabor}.

A simple exercise shows that for two finite symmetric generating sets $S,T$ of a group, 
the corresponding metrics $d_S, d_T$ are {\em bi-Lipschitz}.  That is, there exists some constant $C = C(S,T)>0$ such that
$$ C^{-1} d_S(x,y) \leq d_T(x,y) \leq C d_S(x,y) $$ 
for all $x,y \in G$.
Specifically, all Cayley graph metrics on the same group are quasi-isometric to one another.

Moreover, if $H \leq G$ is a subgroup of finite index $[G:H] < \infty$ of a finitely generated group $G$, then $H$ is finitely 
generated as well (see \eg \cite[Exercise 1.61]{HFbook}).  The Cayley graphs of $G$ and of $H$ are quasi-isometric as well
(see \eg \cite{Gabor}).

Cayley graphs are geodesic metric spaces. 
One can show that this property implies that metric-functionals of infinite Cayley graphs are always unbounded from below.  In fact, for any $h \in \p (G,d_S)$ and any integer $r \geq 0$ there exists $x \in G$
with $h(x) = -|x|_S = -r$.  (For a proof see Lemma \ref{lem:integer valued geodesic metrics} below.) 
This property separates metric-functionals from interior points $b_x$, $x \in G$, because one readily verifies that $b_x(y) \geq -|x|_S$
for all $x,y\in G$.

Another property of Cayley graphs is that they are {\em proper} metric spaces; \ie balls are compact (finite in our case).
In fact, any geodesic, integer valued, proper, left-invariant metric on a group $G$ can be easily shown to be a metric arising from a Cayley graph.
We call such metrics {\em Cayley metrics} (these are also known as {\em word metrics}).

These properties imply that for a converging sequence $b_{x_n} \to h \in \overline{ (G,d_S)}$,
we have a dichotomy:  Either $x_n=x$ for all large enough $n$, and $h=b_x$, or $h \in \p (G,d_S)$ and $|x_n| \to \infty$,
but both cannot hold simultaneously. 
So boundary points in $\p (G,d_S)$ are indeed ``points at infinity'', and the structure $\overline{ (G,d_S)}$ is a 
compactification of $G$.

For an example see Example \ref{exm:Zd boundary} below.

\subsection{Action on the boundary}

\label{scn:action on boundary}

Let $d$ be any metric on a group $G$ and let $x_0=1$.
$G$ acts naturally on $L(G,d)$ by $x.h(y) = h(x^{-1} y) - h(x^{-1})$.
One readily verifies that this is a continuous action, and that $\p (G,d)$ is $G$-invariant.
(Note that $x.b_y=b_{xy}$.)

Assume that $h \in L(G,d)$ is a fixed point for the $G$-action.
That is, $x.h=h$ for all $x \in G$.  This precisely means that $h$ is a homomorphism from $G$ into $\R$.

Furthermore, if $h \in L(G,d)$ has a finite orbit $|G.h| < \infty$, then by taking $H$ to be the stabilizer of $h$,
we obtain a finite index $[G:H] < \infty$ subgroup, such that the restriction of $h$ to $H$ is a homomorphism from $H$ into $\R$.
If $h \big|_H \equiv 0$ then $h$ is a bounded function on $G$.  Thus, if the situation is such that $h \in \p (G,d)$
and all metric-functionals are unbounded, then we have obtained a non-trivial homomorphism from the finite index subgroup $H$
into $\R$.  So $h(H)$ is an infinite finitely generated abelian group, implying that $H$ admits some surjective homomorphism onto $\Z$.

\begin{definition} \label{dfn:vir hom}
Let $G$ be a group.  A \define{virtual homomorphism} on $G$ is a function $\vphi :G \to \Z$
such that there exists a finite index subgroup $[G:H] < \infty$ for which the restriction $\vphi \big|_H$ is a non-trivial
homomorphism. 
\end{definition}

Let us remark that in many texts a group admitting a virtual homomorphism is called a \define{virtually indicable} group.

Thus, we have seen that if $d$ is an integer valued metric on $G$ and $h \in \p(G,d)$ is unbounded and has a finite orbit $|G.h| <\infty$, then 
$h:G \to \Z$ is such a virtual homomorphism.
However, when $d=d_S$ is the metric of some Cayley graph,
it is not necessarily true that any virtual homomorphism can be found this way, 
as the next theorem shows.

\begin{theorem} \label{thm:hyperbolic no detection}
Let $G$ be a finitely generated Gromov hyperbolic group, which is not virtually cyclic.
Let $d_S$ be the metric of some Cayley graph on $G$.

Then, there are no finite orbits in $\p (G,d_S)$.
\end{theorem}

A finitely generated free group is Gromov hyperbolic, which implies the following corollary.

\begin{corollary} \label{cor:free group no detection}
Let $\F_d$ be the free group of $d \geq 2$ generators. Let $d_S$ be the metric of some Cayley graph on $\F_d$.
Then, there are no finite orbits in $\p (\F_d,d_S)$.
\end{corollary}

For the definition of Gromov hyperbolic groups and for a proof of Theorem \ref{thm:hyperbolic no detection},
see Section \ref{scn:no detection}.

\subsection{Banach metrics}

In light of Theorem \ref{thm:hyperbolic no detection}, if we consider the 
``detection problem'' for virtual homomorphisms via the metric-functionals, we must broaden the types of possible metrics we use
to more than just Cayley graphs.
As mentioned above, metric-functionals play an analogous role to linear functionals.
In the linear setting (vector spaces), the only bounded function which is a linear functional is the $0$ functional (trivial functional).
In the general metric space setting, the properties of triviality and boundedness become distinct. 
To preserve our analogy to the linear world we require that metric functionals are unbounded.
This is in addition to other metric properties, as detailed in the following definition.

\begin{definition} \label{dfn:Banach metric}
Let $G$ be a finitely generated infinite group.
A metric $d$ on $G$ is called a \define{Banach metric} if it has the following properties:
For all $x,y,z \in G$,
\begin{enumerate}
\item $d(x,y) \in \N$ (integer valued).
\item $d(zx,zy)= d(x,y)$ (left-invariant).
\item For any $r>0$ the ball $B_d(r) = \{ x \in G \ : \ d(x,1) \leq r \}$ is finite (\ie $(G,d)$ is a proper metric space).
\item $(G,d)$ is quasi-isometric to a Cayley graph (group geometry).
\item Any metric-functional $h \in \p (G,d)$ is an unbounded function.
\end{enumerate}
\end{definition}


It follows from Lemmas \ref{lem:integer valued geodesic metrics} and \ref{lem:boundary points are at infinity}
below that any Cayley metric is a Banach metric.
However, Banach metrics are more general than Cayley graphs.

\begin{example}
A preliminary example: 
Let $G$ be a group with a finite symmetric generating set $S$.
It is not difficult to verify that $M \cdot d_S$ is a Banach metric for any positive integer $M$.
Also, if $M>1$ then $M \cdot d_S$ cannot be a Cayley metric for the technical reason that Cayley metrics 
have minimal distance $1$, and this metric has minimal distance $M$.
\end{example}

A more intriguing example will be given in Lemma \ref{lem:BM construction}.

%
%
%

\subsection{Finite index subgroups}

When discussing metrics on groups, it makes sense to consider 
subgroups as subspaces of the original metric space (by inducing the metric on 
the subgroup). 
However, in the Cayley graph setting, 
this does not result in the same structure.  Typically, the induced metric on a subgroup will not be a Cayley metric.  
For example, it may fail to be a geodesic metric.  This is the case even for subgroups of finite index.
Thus, one sees that the category of Cayley graphs may not be useful if we wish to permit ourselves to pass freely to finite index subgroups
(as is usually the situation when considering geometric properties of the group).

Contrary to the situation for Cayley graphs, we prove that Banach metrics do induce Banach metrics on finite index subgroups.
This stability is another motivation to consider these more general types of metrics. 

\begin{theorem} \label{thm:finite index BM}
Let $G$ be a finitely generated group, and let $H \leq G$ be a finite index subgroup $[G:H] < \infty$.
Let $d$ be a Banach metric on $G$, and let $d_H$ be the induced metric on $H$ (as a subset).

Then, $d_H$ is a Banach metric on $H$.
\end{theorem}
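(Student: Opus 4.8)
The plan is to verify the five defining properties of a Banach metric one at a time for $(H, d_H)$, where the first three are essentially inherited for free and the real content lies in properties (4) and (5). Properties (1) and (2) are immediate: $d_H$ takes integer values because $d$ does, and $d_H$ is left-invariant under $H$ because $d$ is left-invariant under all of $G$. Property (3) is also inherited: a ball $B_{d_H}(r)$ in $H$ is the intersection $B_d(r) \cap H$, which is finite since $B_d(r)$ is finite. So after a short paragraph these three are dispatched.

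For property (4) I would use the standard fact (cited in the excerpt, \cite{Gabor}) that the inclusion $H \hookrightarrow G$ is a quasi-isometry when $[G:H] < \infty$; more precisely, if $d_T$ is any Cayley metric on $G$ to which $d$ is quasi-isometric, then $H$ with an appropriate word metric $d_{T'}$ sits quasi-isometrically inside $(G, d_T)$, and composing quasi-isometries $ (H, d_{T'}) \to (G, d_T) \to (G, d)$ and restricting the target, one gets that $(H, d_H)$ is quasi-isometric to $(G,d)$ and hence to a Cayley graph. The one point to check carefully is that restricting a quasi-isometry $G \to G$ (say the identity, viewed as a map between the two metrics) to the subset $H$ still satisfies the coarse-surjectivity (``$C$-dense'') clause — but this follows because $H$ is already $C_0$-dense in $G$ for $C_0$ roughly the diameter of a transversal, so $H$ is coarsely dense in $(G,d)$ as well, $d$ being quasi-isometric to $d_T$.

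The main obstacle is property (5): every metric-functional $h \in \partial(H, d_H)$ must be unbounded. The difficulty is that a boundary point of the subspace $(H, d_H)$ need not obviously extend to, or come from, a boundary point of the ambient $(G, d)$, and even if it did, unboundedness of ambient metric-functionals is only assumed, not obviously transferable. The strategy I would pursue is: given $h \in \partial(H,d_H)$, realize it as a pointwise limit $b^H_{x_n} \to h$ with $x_n \in H$ and (since $(H,d_H)$ is proper and $h$ is a boundary point) $d_H(x_n, 1) = |x_n|_{d_H} \to \infty$. Because the inclusion is a quasi-isometry, $|x_n|_d \to \infty$ as well. Now pass to a subsequence so that the ambient Busemann functions $b_{x_n}$ (on all of $G$) converge pointwise to some $g \in \overline{(G,d)}$; since $|x_n|_d \to \infty$, in fact $g \in \partial(G,d)$, which is unbounded by hypothesis. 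It remains to compare $g$ and $h$: for $y \in H$ one has $b^H_{x_n}(y) = d_H(x_n, y) - d_H(x_n, 1)$ versus $b_{x_n}(y) = d(x_n,y) - d(x_n,1)$, and these differ by the ``defect'' $(d_H - d)$ terms. The key estimate to establish is that $d_H(x_n, y) - d(x_n, y)$ stays within a bounded range as $n \to \infty$ for fixed $y$ — i.e. geodesics in the ambient $(G,d)$ between two points of $H$ can be ``pushed'' into $H$ at bounded cost, using that $H$ is coarsely dense and that $d$ is quasi-geodesic (being quasi-isometric to a Cayley graph). Granting that, $h$ and $g|_H$ differ by a bounded function on $H$; since $g$ is unbounded on $G$ and $H$ is coarsely dense, $g|_H$ is unbounded on $H$, hence so is $h$.

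Thus the heart of the argument — and where I would spend the most care — is the quasi-geodesic ``pushing'' lemma for Banach metrics: that in a Banach metric $d$ on $G$, any two points can be joined by a sequence of steps of uniformly bounded $d$-length, and that such a path can be perturbed to lie in $H$ with only a bounded additive loss in length. This should follow from property (4) together with finiteness of the index, but making the constants uniform in the endpoints is the delicate point, and I would phrase it as a standalone lemma before assembling the five verifications above.
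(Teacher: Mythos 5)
Your handling of properties (1)--(4) matches the paper's and is fine. The issue is with property (5), where you have introduced a difficulty that is not actually present, and it stems from a misreading of what $d_H$ is. The theorem defines $d_H$ as the \emph{induced} (restricted) metric on $H$ as a subset of $(G,d)$, so $d_H(x,y) = d(x,y)$ for all $x,y \in H$ exactly. You instead appear to be treating $d_H$ as an intrinsic metric on $H$ (something like a word metric of $H$), which is why you speak of a ``defect'' $d_H(x_n,y) - d(x_n,y)$ and propose a ``pushing lemma'' to bound it. There is no defect: for $x,y \in H$ the Busemann function $b^H_x(y) = d_H(x,y) - d_H(x,1)$ equals $b^G_x(y) = d(x,y) - d(x,1)$ on the nose. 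Consequently, once you pass to a subsequence with $b^G_{x_n} \to f$ pointwise on $G$, you get $f|_H = h$ \emph{exactly}, not merely up to a bounded function, and the entire quasi-geodesic ``pushing'' lemma you flag as the heart of the argument is unnecessary. This is precisely how the paper proceeds.

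Your final paragraph (use coarse density of $H$: pick $g_n \in G$ with $|f(g_n)| \to \infty$, pass to a subsequence lying in a single coset $Hr$, and use the Lipschitz bound $|f(y_n r) - f(y_n)| \leq d(r,1)$ to deduce unboundedness of $h$) is correct and is exactly the paper's argument. One small additional point the paper takes care of that you gloss over: you assert that $|x_n|_d \to \infty$ forces $f \in \partial(G,d)$. That implication is clear in a geodesic space but is not automatic for an arbitrary metric; the paper avoids the question entirely by noting that $f$ is unbounded in either case --- if $f \in \partial(G,d)$ this is property (5), and if $f = b_x$ for some $x \in G$ then $b_x$ is unbounded because $d$ is proper. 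You should phrase it that way to avoid committing to an unproven dichotomy.
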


The proof of Theorem \ref{thm:finite index BM} is in Section \ref{scn:finite index}.

\subsection{Detection of virtual homomorphisms}

The notion of a Banach metric is not just a generalization, but, as mentioned, it is useful for detecting virtual homomorphisms.
This is our main result.

\begin{theorem} \label{thm:detection}
Let $G$ be a finitely generated group.  
The following are equivalent:
\begin{itemize}
\item $G$ admits some virtual homomorphism.
\item There exists a Banach metric $d$ on $G$ and some $h \in \p (G,d)$ such that $h$ has a finite orbit $|G.h| < \infty$.
\end{itemize}

Moreover, if $G$ admits an actual homomorphism onto $\Z$, then $h \in \p (G,d)$ above can be chosen 
so that it is a fixed point of $G$.
\end{theorem}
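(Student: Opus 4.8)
The plan is to prove the two directions separately. One direction is essentially already contained in the discussion preceding Definition \ref{dfn:vir hom}: if $d$ is a Banach metric and $h \in \p(G,d)$ has a finite orbit, then the stabilizer $H = \stab(h)$ has finite index, $h|_H$ is a homomorphism into $\R$, and since property (5) of a Banach metric forces $h$ to be unbounded, $h|_H \not\equiv 0$; hence $h(H)$ is a nontrivial finitely generated subgroup of $\R$, which surjects onto $\Z$, so some integer multiple of $h$ (or a suitable $\Z$-valued modification) is a virtual homomorphism. The only point to be careful about is that we want the virtual homomorphism to be $\Z$-valued; I would note that $h$ is integer-valued because $d$ is (property (1)) and $h$ is a pointwise limit of Busemann functions $b_x$, which are integer-valued — a detail I'd state and cite back to Lemma \ref{lem:integer valued geodesic metrics} or verify directly.

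The substantive direction is: given a virtual homomorphism $\vphi : G \to \Z$ with finite-index subgroup $H$ such that $\vphi|_H : H \to \Z$ is a nontrivial homomorphism, construct a Banach metric $d$ on $G$ and a metric-functional $h \in \p(G,d)$ with finite orbit. The natural strategy is to build $d$ so that $\vphi$ (or a translate/average of it) literally appears as a metric-functional. The key construction step is the following: take a Cayley metric $d_S$ on $G$ and modify it using $\vphi$. I would try a metric of the form
$$
d(x,y) = \max\bigl\{ \, |\vphi(x) - \vphi(y)| \,,\ \eps \cdot d_S(x,y) \, \bigr\}
$$
for a suitable small parameter, or more robustly a metric built so that $d(x,y) \geq |\vphi(x)-\vphi(y)|$ always while remaining bi-Lipschitz-comparable to $d_S$ — this is the sort of thing Lemma \ref{lem:BM construction} is presumably set up to provide, so I would invoke that lemma (or its proof technique) to get a Banach metric $d$ with $|\vphi(x)-\vphi(y)| \le d(x,y) \le C\, d_S(x,y)$, hence $\vphi$ is $1$-Lipschitz on $(G,d)$. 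Then one verifies the five axioms: integer-valuedness, left-invariance (using that $\vphi|_H$ is a homomorphism and more care for the $G/H$ part — this is where one may need to symmetrize over coset representatives), properness and quasi-isometry (from the $d_S$-comparability), and unboundedness of metric-functionals (the delicate axiom, which I'd expect Lemma \ref{lem:BM construction} to handle).

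It remains to produce the finite orbit. Since $\vphi|_H$ is a homomorphism and $\vphi$ is $1$-Lipschitz on $(G,d)$, the translates $x.\vphi$ for $x$ ranging over coset representatives of $H$ give at most $[G:H]$ distinct functions, so $\vphi$ has a finite $G$-orbit inside $L(G,d)$ once we know $\vphi \in \ov{(G,d)}$; and $\vphi$ is unbounded (as $\vphi|_H$ is a nontrivial homomorphism into $\Z$), so if $\vphi \in \ov{(G,d)}$ it must lie in $\p(G,d)$ rather than being some $b_x$. To see $\vphi \in \ov{(G,d)}$ one exhibits a sequence $x_n$ with $|x_n| \to \infty$ and $b_{x_n} \to \vphi$ pointwise: the construction of $d$ should be arranged precisely so that moving far in a direction along which $\vphi$ increases makes $b_{x_n}(y) = d(x_n,y) - d(x_n,1)$ converge to $\vphi(y)$. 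This "$\vphi$ is realized as a metric-functional" step is the main obstacle, and it is exactly what forces the particular shape of the metric $d$; I expect it is the heart of Lemma \ref{lem:BM construction} and I would lean on that lemma to finish. Finally, for the "moreover" clause: if $\vphi : G \to \Z$ is itself a surjective homomorphism, then $H = G$, $\stab(\vphi) = G$, the orbit is a singleton, and $\vphi$ is a fixed point of the $G$-action — this is immediate once the $H = G$ case of the construction is in hand.
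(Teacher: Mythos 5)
Your ``easy'' direction is fine, and your instinct that Lemma \ref{lem:BM construction} should supply the metric is correct. But in the ``hard'' direction there are two genuine gaps that your plan does not repair. First, left-invariance fails for the metric you propose: if $\vphi$ is a homomorphism only on the finite-index subgroup $H$ and not on all of $G$, then $|\vphi(x)-\vphi(y)|$ is not left-invariant under $G$, since for $g \in G \setminus H$ there is no reason for $\vphi(gx)-\vphi(gy)=\vphi(x)-\vphi(y)$. ``Symmetrizing over cosets'' does not obviously restore this, and in any case Lemma \ref{lem:BM construction} needs an honest surjective homomorphism $\pi : G \to H$ --- a virtual one does not fit the hypotheses. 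Second, your finite-orbit argument is flawed: from $x_2 = x_1 h$ with $h \in H$ you want $x_2.\vphi = x_1.\vphi$, which would need $h.\vphi = \vphi$; but $h.\vphi(y) = \vphi(h^{-1}y) - \vphi(h^{-1})$ only agrees with $\vphi(y)$ when $y \in H$, because $\vphi$ need not be a homomorphism off $H$. So $H$ need not stabilize $\vphi$, and the bound $|G.\vphi| \le [G:H]$ does not follow.

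The paper's route avoids both problems by not trying to realize $\vphi$ itself as a metric functional. It first passes from the virtual homomorphism to a genuine surjective homomorphism $\pi : G \to Q$ onto a \emph{virtually abelian quotient} $Q$ (Lemma \ref{lem:VH implies VA}: intersect all conjugates of $\ker(\vphi|_H)$ to obtain a normal subgroup $N \lhd G$ with $H/N$ abelian and $[G/N : H/N] < \infty$). Separately, Corollary \ref{cor:vir Abelian fin orbit} (built on Lemma \ref{lem:Cayley graphs for VA} and Walsh's geodesic characterization, Proposition \ref{prop:Walsh geodesics}) shows that $Q$ admits a Cayley metric $d_Q$ with some $h \in \p(Q,d_Q)$ of finite $Q$-orbit. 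Only then are Lemmas \ref{lem:BM construction} and \ref{lem:BM from quotient} invoked with the honest homomorphism $\pi$: the Banach metric $D(x,y)=\max\{d_G(x,y),\, M \cdot d_Q(\pi(x),\pi(y))\}$ is automatically left-invariant because $\pi$ is a homomorphism on all of $G$, and the resulting $f = M\cdot h \circ \pi$ lies in $\p(G,D)$ with $|G.f| \le |Q.h| < \infty$. Note that this $f$ is in general unrelated to your original $\vphi$. The two ideas missing from your proposal are exactly the reduction to a virtually abelian quotient and the Walsh-type geodesic argument that manufactures a finite orbit in the hypercubic-lattice piece; without them you cannot certify a finite orbit in $\p(G,D)$.
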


The proof of Theorem \ref{thm:detection} is at the end of Section \ref{scn:detection}.

\subsection{Nilpotent groups}

It is well known that any finitely generated nilpotent group admits a homomorphism onto $\Z$ (indeed it always has an infinite 
Abelianization). Thus, virtually nilpotent groups always admit some virtual homomorphism.
Walsh \cite{Walsh} has shown that in {\em any} Cayley graph of a nilpotent group there is a 
finite orbit in the metric-functional boundary (see also \cite{develin}).  To our knowledge, there is no such result for 
{\em virtually} nilpotent groups.

As an immediate consequence of Theorem \ref{thm:detection} we have:

\begin{corollary} 
\label{cor:vir nilpotent}
Let $G$ be a finitely generated virtually nilpotent group. There exists some Banach metric $d$ on $G$ 
with a finite orbit in $\p (G,d)$.
\end{corollary}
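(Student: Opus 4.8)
The plan is to deduce Corollary~\ref{cor:vir nilpotent} directly from Theorem~\ref{thm:detection} together with the standard fact about nilpotent groups. First I would recall that if $G$ is finitely generated and virtually nilpotent, then $G$ contains a finite index subgroup $N \leq G$ which is nilpotent and finitely generated (the latter by the fact cited in the excerpt that finite index subgroups of finitely generated groups are finitely generated). A finitely generated nilpotent group $N$ has infinite abelianization $N/[N,N]$, because the lower central series of a non-trivial finitely generated nilpotent group has a non-trivial abelian quotient at the top, and finitely generated abelian groups that are non-trivial surject onto $\Z$ unless they are finite --- but the abelianization of a finitely generated infinite nilpotent group is infinite (if it were finite, then $N$ would be finite by an easy induction on the nilpotency class, using that each quotient in the lower central series is finitely generated). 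Hence $N$ admits a surjective homomorphism onto $\Z$, which, viewed as a function $G \to \Z$ defined only on $N$ (or extended arbitrarily to $G$), is a virtual homomorphism in the sense of Definition~\ref{dfn:vir hom}.

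Having established that $G$ admits a virtual homomorphism, I would then simply invoke the implication ``virtual homomorphism $\Rightarrow$ existence of a Banach metric $d$ with a finite orbit in $\p(G,d)$'' from Theorem~\ref{thm:detection}. This gives exactly the conclusion of the corollary: there exists a Banach metric $d$ on $G$ with a finite orbit in $\p(G,d)$.

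There is essentially no obstacle here --- the corollary is labelled ``immediate consequence'' precisely because the entire content is packaged into Theorem~\ref{thm:detection}. The only mildly non-trivial point to state carefully is \emph{why} a finitely generated virtually nilpotent group admits a virtual homomorphism, i.e.\ the reduction to a finite index nilpotent subgroup and the infinitude of its abelianization; but this is classical and can be cited or dispatched in one sentence. One should also note the degenerate case: if $G$ itself is finite, then it is not ``infinite'' and Banach metrics were only defined for finitely generated \emph{infinite} groups, so implicitly (as in the rest of the paper) $G$ is assumed infinite; a finitely generated infinite virtually nilpotent group indeed has an infinite nilpotent finite index subgroup, so the argument goes through. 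Thus the proof is a two-line deduction: nilpotent $\Rightarrow$ virtually indicable, then apply Theorem~\ref{thm:detection}.
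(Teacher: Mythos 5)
Your proposal is correct and follows exactly the route the paper intends: reduce to a finite index nilpotent subgroup, use the classical fact that a finitely generated infinite nilpotent group has infinite abelianization and hence surjects onto $\Z$ (giving a virtual homomorphism on $G$), and then invoke Theorem~\ref{thm:detection}. The paper simply states the nilpotent fact as ``well known'' rather than sketching the induction on the lower central series, but the logical structure is identical.
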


Corollary \ref{cor:vir nilpotent} is new even for virtually Abelian groups, as far as we know.
In order to prove Theorem \ref{thm:detection} we are required to specifically analyze virtually Abelian groups.
As part of the proof of Theorem \ref{thm:detection}, 
we prove a stronger version of Corollary \ref{cor:vir nilpotent} for the (special) case of virtually Ablelian groups, as follows.

\begin{theorem}
\label{thm:vir Abelian}
Let $G$ be a finitely generated virtually Abelian group. There exists some Cayley metric $d_S$ on $G$ 
with a finite orbit in $\p (G,d_S)$.
\end{theorem}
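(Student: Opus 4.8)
The plan is to exploit the structure of a finitely generated virtually Abelian group $G$: it contains a finite-index subgroup isomorphic to $\Z^d$ for some $d \geq 0$, and in fact (passing to the core) a finite-index normal subgroup $N \cong \Z^d$ with $G/N$ finite. The case $d = 0$ is trivial (then $G$ is finite, contradicting that $G$ is infinite, so $d \geq 1$). The idea is to build a Cayley metric on $G$ that, when restricted to $N \cong \Z^d$, looks essentially like an $\ell^1$-type metric, and then to use the known behaviour of metric-functionals on $\Z^d$ (cf. Example \ref{exm:Zd boundary}) — the linear functionals $y \mapsto -\langle u, y\rangle$ for $u$ in the unit ball of the dual — to produce a finite orbit. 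Concretely, I would look for a generating set $S$ of $G$ whose associated metric $d_S$ has the property that some metric-functional $h \in \p(G,d_S)$ restricts on $N$ to a linear functional whose direction $u$ is fixed (up to the finite action of $G/N$) by the $G$-action; the finite orbit of $h$ then corresponds to the finite $G/N$-orbit of the direction $u$.

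The key steps, in order: (1) Reduce to $G = \Z^d \rtimes_\rho Q$ with $Q$ finite and $\rho: Q \to \mathrm{GL}_d(\Z)$ the action (using that the finite-index subgroup can be taken normal by passing to the core, and that finitely generated virtually Abelian groups are virtually $\Z^d$). (2) Identify how $G$ acts on metric-functionals that are "linear on $N$": for $h$ with $h|_N(y) = -\langle u, y\rangle$, an element $g = (v,q) \in G$ sends $h$ to a metric-functional whose restriction to $N$ is $y \mapsto -\langle \rho(q)^{-T} u, y\rangle$ (up to the correct affine normalization $h(x^{-1}) $ term, which vanishes in the linear part), so the orbit of the "direction" is the $Q$-orbit of $u$ under the dual action $\rho^{-T}$ — automatically finite. (3) The real content: choose the generating set $S$ (equivalently, the metric $d_S$) so that (a) there actually exists a metric-functional $h \in \p(G,d_S)$ whose restriction to $N$ is exactly a linear functional in a prescribed rational direction $u_0$, and (b) the stabilizer computation works out, i.e. the full $G$-orbit of $h$ is finite (not just the orbit of the $N$-restriction). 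For (a) I would pick $u_0$ to be a common eigendirection or, if none exists, take $S$ adapted so that the word metric on $N$ has asymptotic shape whose boundary contains the needed linear functional — e.g. engineer $S$ so that $d_S$ restricted to $N$ is comparable to $\|\cdot\|_1$ in suitable coordinates and use a coordinate functional, then symmetrize over $Q$; alternatively use Walsh's result \cite{Walsh} that every Cayley graph of $\Z^d$ (which $N$ is) has a finite orbit in its metric-functional boundary, and lift. (4) Verify the five axioms are automatic (it is a Cayley metric by construction) and that $h$ is genuinely a boundary point (unbounded, by Lemma \ref{lem:integer valued geodesic metrics}), so it yields a virtual homomorphism, closing the loop with Theorem \ref{thm:detection}.

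The main obstacle I anticipate is step (3)(b): controlling the entire $G$-action on a metric-functional, not just its restriction to the normal subgroup $N$. A metric-functional on $G$ is determined by far more than its values on $N$; two metric-functionals agreeing on $N$ can differ on the finitely many cosets, and the $G$-action mixes these. So one must either (i) show the boundary point $h$ is uniquely determined by its restriction to $N$ together with some discrete data permuted finitely by $G$, or (ii) directly construct the finite set $\{h_1, \dots, h_k\} = G.h$ by writing down each $h_i$ explicitly on all of $G$ and checking $G$-invariance of the set. Route (ii) seems more robust: define, for each direction $u$ in the finite $Q$-orbit of $u_0$, a candidate functional $h_u$ on $G$ by an explicit formula (roughly $h_u(g) = -\langle u, \pi(g)\rangle + (\text{bounded coset correction})$, where $\pi: G \to \Z^d$ is a suitable quasi-morphism), verify each $h_u \in \p(G,d_S)$ by exhibiting it as a limit of Busemann functions $b_{x_n}$ along a sequence $x_n \to \infty$ in the $u$-direction, and then check the action permutes the $h_u$'s. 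The delicate point is choosing the coset correction term and the generating set $S$ simultaneously so that all $h_u$ really are limits of Busemann functions — this is where the freedom to choose $S$ (rather than being handed an arbitrary Cayley graph) is essential, and it is the technical heart of the argument.
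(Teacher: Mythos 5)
Your high-level plan matches the paper's: find a finite-index $N\cong\Z^d$, choose the generating set of $G$ so that the word metric on $G$ interacts well with the $\Z^d$ structure, and transfer a finite orbit from the boundary of $N$ to the boundary of $G$. You also correctly pinpoint the hard step --- controlling the full $G$-action on a boundary point of $\Gamma(G,S)$, not merely its restriction to $N$ --- and you correctly observe that the freedom to choose $S$ is essential. But the step where the transfer actually happens is left as a gap, and your two suggested routes would not obviously close it.

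Two concrete issues. First, in step (3)(a) you ask that $d_S$ restricted to $N$ be \emph{comparable} to an $\ell^1$ metric; comparability is too weak. What the paper arranges (Lemma~\ref{lem:Cayley graphs for VA}) is that $d_T$ restricted to $N$ is \emph{equal} to the word metric $d_U$ of a hypercubic $\Z^d$-lattice, i.e.\ $|x|_U=|x|_T$ for all $x\in N$. This is achieved by taking $U=\{b_1^{\pm K},\ldots,b_d^{\pm K}\}$ for a large integer $K$, so that any use of an $S$-generator inside $N$ is strictly wasteful; the proof is a careful $\ell^\infty$/$\ell^1$ bookkeeping. Exact equality, not comparability, is what makes every geodesic of $\Gamma(N,U)$ remain a geodesic of $\Gamma(G,T)$, and that geodesic-preservation is the mechanism by which anything at all is transferred to $\p\Gamma(G,T)$. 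Second, for route (ii) you would need to prove that two explicitly defined candidate functionals are limits of Busemann sequences in $\Gamma(G,T)$ and are permuted by $G$; the ``bounded coset correction'' is the part of a horofunction that is most sensitive to the fine geometry of the Cayley graph and is exactly the data you cannot write down by hand. The paper avoids this entirely by invoking Proposition~\ref{prop:Walsh geodesics} (Walsh's criterion: two infinite geodesics converge to the same boundary point iff some infinite geodesic meets both infinitely often). Given a geodesic $\gamma$ in $\Gamma(N,U)$ converging to an $N$-fixed point $h\in\p\Gamma(N,U)$, for each $x\in N$ Walsh's criterion in $\Gamma(N,U)$ gives a geodesic $\alpha$ meeting $\gamma$ and $x.\gamma$ infinitely often; since $\gamma,x.\gamma,\alpha$ are all still geodesics in $\Gamma(G,T)$, Walsh's criterion applied again in $\Gamma(G,T)$ forces $\gamma$ and $x.\gamma$ to converge to the same $f\in\p\Gamma(G,T)$, so $N$ fixes $f$ and $|G.f|\le[G:N]<\infty$. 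Without this criterion (or some substitute), the lift from $\p\Gamma(N,U)$ to $\p\Gamma(G,T)$ has no handle. So: right skeleton, and an honest acknowledgment of where the difficulty lies, but the technical heart is missing.
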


Theorem \ref{thm:vir Abelian} is proven in Section \ref{scn:detection},
see Corollary \ref{cor:vir Abelian fin orbit} there.

While this work was being prepared, Bodart \& Tashiro uploaded 
a preprint to the arXiv where they conjectured 
that a group is virtually Abelian if and only if it has some Cayley metric with a countable metric-functional boundary, see \cite[Conjecture 1(a)]{BT24}.
Through personal communication Bodart \& Tashiro have informed us that they can most likely prove the ``only if'' part: any Cayley graph of a virtually Abelian group has a countable metric-functional boundary.
As already observed by Karlsson \cite{karlsson2008ergodic}, 
by considering a stationary measure on the boundary, one obtains that a countable boundary implies the existence of a finite orbit (take a maximal atom of this measure).

It is not difficult to see that the proof of Theorem 
\ref{thm:vir Abelian} and Corollary \ref{cor:vir Abelian fin orbit} actually proves that for a virtually nilpotent group there always exists 
{\em some} Cayley graph with a countable metric-functional boundary.
We have chosen not to expand on this too much, as Bodart \& Tashiro's yet unpublished result is stronger than our Theorem \ref{thm:vir Abelian},
since it asserts the same for {\em any} Cayley graph of a virtually Abelian group.

\subsection{Gap conjecture}

We recall the usual partial order on monotone functions.
For monotone non-decreasing $f,h : \N \to [0,\infty)$ we write $f \preceq h$ if there exists $C>0$ such that
$f(n) \leq C h(Cn)$ for all $n \in \N$.
This provides an equivalence relation on such functions by $f \sim h$ if and only if $f \preceq h$ and $h \preceq f$.

The \define{growth} of a finitely generated group is the equivalence class of the function $\mathsf{gr}(r) = |B(1,r)|$,
where $B(1,r)$ is the ball of radius $r$ in some fixed Cayley graph.
Since different Cayley graph metrics are quasi-isometric, they provide the same equivalence class of growth,
so this definition does not depend on the specific choice of Cayley graph.

As mentioned in the introduction, the following has been conjectured by Grigorchuk \cite{Grigorchuk90}.

\begin{conj}[Grigorchuk's gap conjecture]
\label{conj:gap}
Let $G$ be a finitely generated group of growth $\preceq r \mapsto \exp ( r^\alpha )$ for some $\alpha < \tfrac12$.

Then, $G$ is virtually nilpotent (and so actually has polynomial growth).
\end{conj}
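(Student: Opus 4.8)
Since the statement above is Grigorchuk's gap conjecture, which remains open at the time of writing, what follows is not a proof but a proposal for how the machinery of this paper might be deployed against it. The natural line of attack, as flagged in the introduction, is to mimic Gromov's two-stage argument: (I)~show that a finitely generated group $G$ of growth $\preceq r\mapsto\exp(r^\alpha)$ with $\alpha<\tfrac12$ admits a virtual homomorphism; (II)~run an induction that promotes this to virtual nilpotency (whence, by Wolf--Bass--Guivarc'h, polynomial growth).

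For stage~(I), I would use Theorem~\ref{thm:detection} to convert the algebraic problem into a metric one: it suffices to produce \emph{some} Banach metric $d$ on $G$ whose metric-functional boundary $\p(G,d)$ carries a finite orbit for the canonical action. The plan is to exploit precisely the flexibility of Banach metrics that Cayley graphs lack. Concretely, I would try to build $d$ — by a weighting or rescaling construction in the spirit of Lemma~\ref{lem:BM construction} — so that $\p(G,d)$ is \emph{countable}; then, by Karlsson's argument \cite{karlsson2008ergodic} (a stationary probability measure on a countable boundary has a maximal atom, whose support is a finite orbit), one obtains the desired virtual homomorphism. The growth hypothesis $\alpha<\tfrac12$ must enter exactly at this point: it has to be leveraged to force the boundary of a well-chosen Banach metric to be small — the same phenomenon Bodart \& Tashiro isolate in the virtually Abelian case, cf.\ \cite{BT24}. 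Since, by Theorem~\ref{thm:finite index BM}, Banach metrics restrict to Banach metrics on finite-index subgroups, one is free to pass to any convenient finite-index subgroup while carrying out this construction.

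For stage~(II), once a virtual homomorphism is in hand one gets, after passing to a finite-index subgroup $H$, a splitting $H\cong\Z\ltimes N$. I would then want to bound the growth of $N$ in terms of that of $H$ and recurse. Here two difficulties surface: $N$ need not be finitely generated, and — unlike the polynomial-growth case, where the degree is a positive integer that strictly drops under such a reduction — there is no evident numerical invariant attached to subexponential growth that decreases under passing from $H$ to $N$, so the induction lacks a well-founded parameter. Understanding how growth transforms under these $\Z$-by-$N$ extensions, and finding the right quantity on which to induct, is the crux of this stage.

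The main obstacle, however, is stage~(I). There is at present no mechanism that extracts a virtual homomorphism — equivalently, via Theorem~\ref{thm:detection}, a finite orbit in some Banach-metric boundary — from a growth bound weaker than polynomial, and every known way of producing a ``good'' Banach metric seems to presuppose structural information about $G$ of roughly the strength one is trying to prove. So the honest assessment is this: the present paper removes the obstruction on the Cayley-graph side (Theorem~\ref{thm:free group no detection}) and shows that the Banach-metric framework is in principle rich enough to detect the homomorphisms one needs (Theorem~\ref{thm:detection}), but converting ``rich enough in principle'' into an explicit construction governed by the exponent $\tfrac12$ is exactly where a proof of Conjecture~\ref{conj:gap} would have to supply genuinely new ideas, and I do not see how to supply them.
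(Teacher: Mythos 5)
You have handled this correctly: Conjecture~\ref{conj:gap} is an open problem, the paper offers no proof of it, and you rightly decline to manufacture one. Your strategic discussion — stage~(I) via Theorem~\ref{thm:detection} and the flexibility of Banach metrics (including restriction to finite-index subgroups via Theorem~\ref{thm:finite index BM} and the countable-boundary/stationary-measure route of Karlsson), stage~(II) by a Gromov-style induction — matches the paper's own motivational framing in the introduction, and your identification of stage~(I) as the crux is precisely why the paper stops short and formulates only the logically weaker ``Weak gap conjecture'' in terms of finite orbits in Banach-metric boundaries. Your observations about the lack of a well-founded induction parameter at subexponential growth and about $N$ possibly failing to be finitely generated are accurate and worth keeping in mind; there is nothing to correct.
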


In light of our main result, Theorem \ref{thm:detection}, and the fact that virtually nilpotent groups always admit virtual homomorphisms,
we conjecture the following (logically weaker) conjecture.

\begin{conj}[Weak gap conjecture]
Let $G$ be a finitely generated group of growth $\preceq r \mapsto \exp ( r^\alpha )$ for some $\alpha < \tfrac12$.

Then, there exists a Banach metric $d$ on $G$ with a finite orbit in $\p (G,d)$.
\end{conj}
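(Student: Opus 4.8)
The plan is to separate what can be proved now from what must be left open. The easy, conditional part: assuming Grigorchuk's gap conjecture (Conjecture \ref{conj:gap}), a finitely generated group $G$ of growth $\preceq r\mapsto \exp(r^\alpha)$ with $\alpha<\tfrac12$ is virtually nilpotent, hence admits a virtual homomorphism (pass to a finite-index nilpotent subgroup, whose Abelianization is infinite), and Corollary \ref{cor:vir nilpotent} then supplies a Banach metric on $G$ with a finite orbit in $\p(G,d)$. In fact, by Theorem \ref{thm:detection} the conclusion of the Weak gap conjecture is \emph{equivalent} to ``$G$ is virtually indicable'', so the conjecture is precisely the assertion that every finitely generated group of growth $\preceq r\mapsto\exp(r^{1/2})$ is virtually indicable. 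This is what makes it logically weaker than Grigorchuk's conjecture: it would survive even if intermediate-growth groups below the gap turned out to exist, as long as all of them were virtually indicable.

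For an unconditional proof one would have to reproduce only the \emph{first} half of Gromov's argument --- the production of a surjection onto $\Z$ from a finite-index subgroup --- but under the weaker hypothesis of sub-gap growth rather than polynomial growth. Concretely I would try to extract from the growth bound a non-trivial finite-dimensional isometric action, or a non-trivial reduced $1$-cocycle, and promote it to a virtual homomorphism; alternatively, since growth $\preceq \exp(r^\alpha)$ forces amenability, every boundary $\p(G,d)$ carries a $G$-invariant probability measure, so it would be enough to construct, using the flexibility of constructions like Lemma \ref{lem:BM construction}, a Banach metric whose metric-functional boundary is small enough --- countable, say, as happens for virtually Abelian groups in Theorem \ref{thm:vir Abelian} --- that any invariant measure must have an atom, whose orbit is then finite.

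The step I expect to be the genuine obstacle is exactly the production of that virtual homomorphism (equivalently, of such a Banach metric). The Grigorchuk groups show that subexponential growth alone does not force virtual indicability, so any proof must truly exploit the threshold $\alpha<\tfrac12$; but no mechanism is known for turning slow-but-superpolynomial growth into algebraic structure, and exploiting the threshold appears to be essentially as hard as the growth-gap phenomenon itself. By contrast, the reduction to virtual indicability is immediate from Theorem \ref{thm:detection}, so the difficulty is entirely concentrated in this one step.
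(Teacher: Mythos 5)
The statement you were given is a conjecture, not a theorem: the paper poses it as an open problem and offers no proof, and you correctly recognize this. Your conditional reduction (Grigorchuk's gap conjecture together with Corollary \ref{cor:vir nilpotent}, or equivalently Theorem \ref{thm:detection}, implies the Weak gap conjecture), your observation that via Theorem \ref{thm:detection} the conclusion is equivalent to ``$G$ is virtually indicable'', and your remark that subexponential growth gives amenability and hence an invariant measure on any boundary (so a countable boundary would suffice, as the paper notes following Karlsson) all match the paper's own motivation for stating the conjecture; the genuinely open step --- deriving virtual indicability from the threshold $\alpha<\tfrac12$ --- is, as you say, the whole difficulty.
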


\subsection{Open questions}

Let us conclude this section with some open questions for possible further research.

\begin{ques} \label{ques:type is invariant}
Let $S,T$ be two finite symmetric generating sets for a group $G$.
Show that $\p (G,d_S)$ is countable if and only if $\p (G,d_T)$ is countable.
\end{ques}

\begin{remark}
One may wonder if Question \ref{ques:type is invariant} can be true for a wider class of metrics, \eg for Banach metrics, and not just Cayley metrics.

The following example shows this does not hold.

Consider $G = \Z^d$ and $D(x,y) : = \lceil \| x- y\|_2 \rceil$.
This is easily seen to be a left-invariant, integer valued, proper metric,
which is quasi-isometric to a Cayley graph (the standard Cayley graph is just the $L^1$-metric).  

Assume that $x_n \in \Z^d$ are such that $\|x_n\|_2 \to \infty$ 
and $\frac{1}{ \|x_n\|_2 } x_n \to v \in \R^d$
(recall that the $L^2$ unit ball in $\R^d$ is compact).
It is a simple calculation to verify that for any $y \in \Z^d$ we have 
$$ \lim_{n \to \infty} \big( \| x_n - y \|_2 - \| x_n \|_2 \big) = - \IP{ v, y } . $$

Metric-functionals in $\p (\Z^d,D)$ are unbounded because if
$b_{x_n} \to f \in \p (\Z^d,D)$, then by passing to a subsequence 
we can assume without loss of generality that $\frac{1}{\|x_n\|_2} x_n \to v$
for some unit vector $\|v\|_2=1$.  We then have that for any $y \in \Z^d$,
$$ | f(y) + \IP{v,y} | = 
\lim_{n \to \infty} \big| D(x_n,y) - \|x_n-y\|_2 + \|x_n\|_2 - D(x_n,0)  \big|
\leq 2 . $$
So $f$ is unbounded.

Now, assume that $(x_n)_n , (z_n)_n$ are sequences in $\Z^d$ such that 
$\frac{1}{\|x_n\|_2} x_n \to v$ and $\frac{1}{\|z_n\|_2} x_n \to w$
for some unit vectors $\|v\|_2=\|w\|_2=1$.
By passing to a subsequence, we may assume without loss of generality that
$b_{x_n} \to f \in \p (\Z^d,D)$ and $b_{z_n} \to h \in \p (\Z^d , D)$.
(Here the Busemann functions are with respect to the metric $D$.)

If $v \neq w$, then we may choose some $u \in \Q^d$ such that 
$\IP{ v,u } < 0$ and $\IP{w,u} > 0$.  Since $u \in \Q^d$, there exists 
large enough $K \in \N$ such that $y = K u \in \Z^d$ and such that
$\IP{ v,y } < -1$ and $\IP{ w,y} > 1$.
We then have that
\begin{align*}
f(y) & \geq \liminf_{n \to \infty} \big( \| x_n-y \|_2 - \|x_n\|_2 -1 \big) 
= - \IP{ v,y } - 1 > 0 , \\
h(y) & \leq \limsup_{n \to \infty} \big( \|z_n-y\|_2 +1 - \|z_n\|_2 \big) 
= - \IP{ w,y} + 1 < 0 .
\end{align*}
So $h(y) \neq f(y)$.

This shows that there is an injective mapping from the unit sphere in $\R^d$
into the metric-functional boundary $\p (\Z^d,D)$, 
so this boundary must be uncountable.

On the other hand,
in \cite{develin} it is shown that any Cayley metric on $\Z^d$ has a countable metric-functional boundary, composed entirely of Busemann points.
\end{remark}

\begin{ques}
Assume that $H \lhd G$ has finite index $[G:H]<\infty$.
Let $d_G, d_H$ be Cayley metrics on $G,H$ respectively.
Show that if $\p (H,d_H)$ is countable, then $\p (G,d_G)$ is also countable.
\end{ques}

\begin{ques}
(Conjecture 1(a) in \cite{BT24})
Show that the metric-functional boundary of a non-virtually Abelian group is uncountable.

Perhaps it is easier to start with ``bigger'' groups:
Show that Cayley graphs of non-amenable / exponential growth / non-virtually nilpotent groups have uncountable metric-functional boundaries.
\end{ques}

Recall that Corollary \ref{cor:vir nilpotent} states that for any virtually nilpotent group there exists some Banach metric with a finite orbit in the boundary.
Theorem \ref{thm:vir Abelian} states that in a virtually Abelian group one can do this with a Cayley graph.  Somewhat frustratingly we do currently know how to prove the following:

\begin{conj}
Let $G$ be a finitely generated virtually nilpotent group.  Let $d_S$ be any Cayley metric on $G$.  Then the metric-functional boundary $\p (G,d_S)$ contains a finite orbit.
\end{conj}

\begin{ques}
Does there exist a non-virtually cyclic group $G$ with a Banach metric $d$
and a finite metric-functional boundary $|\p (G,d) | < \infty$?
\end{ques}

\begin{remark}
In ongoing work of the authors with C.\ Bodart, it seems we can prove that 
if there exists a Cayley graph with a finite metric-functional boundary then the group is virtually cyclic.  (The other direction is known, see \cite{RY23}.)
It is not clear if the weaker assumption of a Banach metric with a finite boundary suffices.
\end{remark}

Finally, note that in the definition of a Banach metric we require 
that it is quasi-isometric to a Cayley metric, and that all metric-functionals are unbounded.  It is not immediately obvious if the second condition is not superfluous.

\begin{ques}
Let $G$ be a finitely generated group, and let $d$ be an integer valued, left-invariant, proper metric on $G$ which is quasi-isometric to some (any) Cayley metric.  Is $d$ a Banach metric? \ie are all metric-functionals in $\p (G,d)$ unbounded?
\end{ques}

\section{Metric functionals and quotient groups}

\subsection{Basic properties of metric functionals}

We include some basic properties of metric functionals which we will require.
The proofs are well known, and we include them only for completeness.

Recall that a metric is \define{geodesic} if there is a geodesic path connecting any two points.
Specifically, for an integer valued metric $d$ on $G$, we say that $d$ is \define {geodesic} if for any $x,y \in G$
there exist $x=z_0 , z_1 , \ldots, z_n=y$ such that $d(z_j , z_k)= |j-k|$ for all $0 \leq j,k \leq n$.

%
%
%
%
%
%

\begin{lemma} \label{lem:integer valued geodesic metrics}
Let $d$ be an integer valued proper geodesic metric on $X$.
Fix a base point $x_0 \in X$.
Let $(x_n)_n$ be a sequence such that $d(x_n,x_0) \to \infty$.

Then, for any $r \in \N$ there exists $x \in X$ and an infinite subset $I  \subset \N$ 
such that $b_{x_n}(x) = -d(x,x_0) = -r$ for all $n \in I$.

As a consequence, if $b_{x_n} \to f \in \overline{ (X,d) }$ for a sequence such that $d(x_n,x_0) \to \infty$,
then for every $r \in \N$ there exists $x \in X$ with $d(x,x_0)=r = -f(x)$.
Specifically the function $f$ is unbounded from below.   
\end{lemma}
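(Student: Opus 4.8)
The plan is to prove the two assertions in turn, the first by a compactness/pigeonhole argument using properness of the metric, and the second as a direct consequence.

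For the main claim, fix $r \in \N$ and consider the sequence $(x_n)_n$ with $d(x_n, x_0) \to \infty$. By passing to a tail, I may assume $d(x_n, x_0) \geq r$ for all $n$. Since $d$ is geodesic, for each $n$ pick a geodesic $x_0 = z_0^{(n)}, z_1^{(n)}, \ldots, z_{k_n}^{(n)} = x_n$ with $k_n = d(x_n, x_0) \geq r$, and set $y_n = z_r^{(n)}$, the point on this geodesic at distance exactly $r$ from $x_0$. Then $d(y_n, x_0) = r$ and, because the concatenation respects geodesic distances, $d(x_n, y_n) = k_n - r = d(x_n, x_0) - r$. Now properness enters: the ball $B_d(x_0, r) = \{ x : d(x, x_0) \leq r \}$ is finite (in the group case; in general properness gives compactness, but for the stated application $X$ is countable and discrete, so I will phrase it for the finite-ball case), so the points $y_n$ all lie in a finite set. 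By pigeonhole there is a single $x \in X$ with $d(x, x_0) = r$ and an infinite subset $I \subseteq \N$ such that $y_n = x$ for all $n \in I$. For these $n$,
\[
b_{x_n}(x) = d(x_n, x) - d(x_n, x_0) = \bigl( d(x_n, x_0) - r \bigr) - d(x_n, x_0) = -r = -d(x, x_0),
\]
which is exactly the desired conclusion.

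For the consequence, suppose $b_{x_n} \to f$ in $\overline{(X,d)}$ with $d(x_n, x_0) \to \infty$. Fix $r \in \N$. Apply the first part to get $x \in X$ with $d(x, x_0) = r$ and an infinite set $I$ with $b_{x_n}(x) = -r$ for all $n \in I$. Since convergence in $\overline{(X,d)}$ is pointwise, $f(x) = \lim_n b_{x_n}(x) = \lim_{n \in I} b_{x_n}(x) = -r$, using that a convergent sequence and any of its subsequences share the same limit. Thus $d(x, x_0) = r = -f(x)$ for every $r \in \N$, and in particular $f$ takes arbitrarily large negative values, so $f$ is unbounded from below.

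The only real point requiring care is the first part, and specifically the interplay between ``geodesic'' and ``proper'': I need the midpoint-type selection $y_n$ to stay in a bounded (hence finite) set so that pigeonhole applies, and I need to be sure that the geodesic property as defined in the excerpt — existence of $z_0, \ldots, z_n$ with $d(z_j, z_k) = |j-k|$ — genuinely forces $d(x_n, y_n) = d(x_n, x_0) - r$. The latter is immediate from plugging $j = r$, $k = k_n$ into the defining equality. Everything else is bookkeeping (passing to tails, passing to subsequences), so I do not anticipate a substantive obstacle.
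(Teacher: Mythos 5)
Your proof is correct and follows essentially the same route as the paper's: pick the point at distance $r$ along a geodesic from $x_0$ to $x_n$, use integer-valuedness plus properness to see that the sphere of radius $r$ is finite, apply pigeonhole to extract the infinite set $I$, and then pass to the limit pointwise for the second assertion. The only cosmetic difference is that the paper phrases the geodesic-point selection for a generic $y$ with $d(y,x_0)>r$ and then specializes, while you work directly with the $x_n$'s from the start.
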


\begin{proof}
Since the metric is integer valued, the topology induced by the metric is discrete.
As $X$ is proper, balls must be finite sets.  

Fix $r \in \N$ and let $S = \{ x \ : \ d(x,x_0) = r \}$.

Let $y \in X$ be such that $d(y,x_0) > r$.
We assumed that $d$ is geodesic, so we may choose a finite geodesic from the base point $x_0$ to $y$;
\ie a finite sequence $x_0 = z_0 , z_1, \ldots, z_m = y$ such that $d(z_j , z_k)= |j-k|$ for all $0 \leq j,k \leq m$.
Consider the point $w = z_r$. Since $d(w,x_0) = d(z_r,z_0) = r$,
we have that $w \in S$ and 
$$ b_y(w) = d(z_r,z_m) - d(z_0,z_m) = - r = - d(w,x_0) . $$

Thus, for any $n$ such that $d(x_n,x_0)>r$, there exists some $w_n \in S$ such that $b_{x_n}(w_n) = - r = - d(w_n,x_0)$.
Since $S$ is finite, there must exist some $x \in S$ such that $w_n = x$ for infinitely many $n$.
Setting $I = \{ n \ : \ w_n = x\}$ completes the proof of the first assertion.

For the second assertion assume that $b_{x_n} \to f$ and $d(x_n,x_0) \to \infty$.
Fix $r \in \N$. The first assertion tells us that for some infinite subset $I \subset N$ 
we have $b_{x_n}(x) = -r$ for all $n \in I$ and some $x$.
This implies that 
$$ f(x) = \lim_{n \to \infty} b_{x_n}(x) = \lim_{I \ni n \to \infty} b_{x_n} (x) = - r . $$
\end{proof}

\begin{lemma}
\label{lem:boundary points are at infinity}
Let $d$ be an integer valued proper metric on $X$.
Fix a base point $x_0 \in X$.

Then, if $b_{x_n} \to h \in \p (X,d)$, it must be that $d(x_n, x_0) \to \infty$.
\end{lemma}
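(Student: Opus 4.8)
The plan is to argue by contradiction. Suppose $b_{x_n} \to h \in \p(X,d)$ but $d(x_n,x_0) \not\to \infty$. Then there is some $R \in \N$ and an infinite subset $I \subset \N$ with $d(x_n,x_0) \leq R$ for all $n \in I$, so that $\set{x_n : n \in I}$ is contained in the ball $B_d(R)$. Since $d$ is integer valued, its metric topology is discrete, and properness then forces $B_d(R)$ to be a finite set (compact balls in a discrete space are finite). By the pigeonhole principle there is a single point $x \in B_d(R)$ with $x_n = x$ for all $n$ in some infinite subset $J \subseteq I$.

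Along $J$ the Busemann functions are constant: $b_{x_n} = b_x$ for every $n \in J$. Convergence in $L(X,d)$ is pointwise, so every subsequence of $(b_{x_n})_n$ also converges to $h$; in particular $h = \lim_{J \ni n \to \infty} b_{x_n} = b_x$. Thus $h \in \set{b_x : x \in X}$, contradicting $h \in \p(X,d) = \overline{(X,d)} \setminus \set{b_x : x \in X}$. Therefore $d(x_n,x_0) \to \infty$.

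The only point requiring a little care is the finiteness of the ball $B_d(R)$; this is exactly where the hypothesis that $d$ is integer valued enters, as it makes the metric topology discrete so that the compact balls provided by properness are finite (in the setting of Definition \ref{dfn:Banach metric} this finiteness is already built into the notion of a proper metric space, so nothing extra is needed). The remainder is a routine pigeonhole-plus-pointwise-limit argument, and I do not anticipate any real obstacle.
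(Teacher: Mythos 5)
Your proof is correct and follows essentially the same argument as the paper's: discreteness of the integer-valued metric plus properness makes balls finite, pigeonhole yields a subsequence constant at some $x$, and pointwise convergence forces $h = b_x \notin \p(X,d)$. If anything, your version is slightly more careful than the paper's in passing from ``$d(x_n,x_0) \not\to \infty$'' to a bounded subsequence rather than assuming the whole sequence is bounded.
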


\begin{proof}
Assume that $(d(x_n,x_0))_n$ is a bounded sequence and that $b_{x_n} \to h$.  We will show that $h \not\in \p(X,d)$,
that is $h = b_x$ for some $x \in X$.

As before, the topology induced by the metric is discrete, and being proper, balls must be finite sets.
So there is some finite set $B$ such that $x_n \in B$ for all $n$.
Hence there must be $x \in B$ such that $x_n=x$ for infinitely many $n$. 
As $b_{x_n} \to h$, it must be that $h=b_x$.
\end{proof}

\subsection{Banach metric construction}

We now move to construct a Banach metric by combining two Cayley graphs in the right way.
This construction exhibits how Banach metrics offer more flexibility than just Cayley graphs.
It will be central to the proof of Theorem \ref{thm:detection}.

\begin{lemma} \label{lem:BM construction}
Let $G$ be finitely generated infinite group, and let $\pi: G \to H$ be a surjective homomorphism.  

Suppose that $d_G$ is a Cayley metric on $G$ and $d_H$ is a Cayley metric on $H$.

Write $| x|_G = d_G(x,1_G)$ and $|q|_H = d_H(q,1_H)$ (here $1_G,1_H$ denote identity elements in the respective groups).
Assume that there exists $C \geq 1$ such that $|\pi(x)|_H \leq C|x|_G$ for every $x \in G$, 
and also for any $q \in H$ there exists $x \in \pi^{-1}(\{q\})$ such that $|x|_G \leq C |q|_H$.

Fix an integer $M > C$ and define 
$$ D(x,y) = \max \{ d_G(x,y) , M \cdot d_H(\pi(x), \pi(y)) \} $$

Then, $D$ is a Banach metric on $G$.
\end{lemma}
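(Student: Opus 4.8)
The plan is to verify the five defining properties of a Banach metric in order; the first four are routine, and essentially all the content lies in property~(5).

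\textbf{The metric and properties (1)--(4).} Since the maximum of two metrics is a metric and $d_G$ is positive definite, $D$ is a metric; it is integer valued because $d_G$ and $M\cdot d_H$ are, and it is left invariant because $d_G$ is and because $d_H(\pi(zx),\pi(zy))=d_H(\pi(z)\pi(x),\pi(z)\pi(y))=d_H(\pi(x),\pi(y))$, using that $\pi$ is a homomorphism and $d_H$ is left invariant. Applying $|\pi(x)|_H\le C|x|_G$ to $x^{-1}y$ and invoking left invariance gives $d_H(\pi(x),\pi(y))\le C\,d_G(x,y)$, hence
$$ d_G(x,y)\ \le\ D(x,y)\ \le\ \max\{d_G(x,y),\,MC\,d_G(x,y)\}\ =\ MC\,d_G(x,y). $$
So $D$ is bi-Lipschitz to the Cayley metric $d_G$; in particular $D$-balls sit inside $d_G$-balls, so $D$ is proper, and $D$ is quasi-isometric to a Cayley graph. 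This gives (1)--(4), and note the strict inequality $M>C$ has not yet been used.

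\textbf{Setting up property (5).} Let $h\in\p(G,D)$ and choose $x_n$ with $b_{x_n}\to h$ for the $D$-Busemann functions. By Lemma~\ref{lem:boundary points are at infinity} (applied to $D$, which is integer valued and proper) we get $D(x_n,1)\to\infty$, and since $|x|_G\le D(x,1)\le MC|x|_G$ this forces $|x_n|_G\to\infty$. Put $m_n=|\pi(x_n)|_H$. Passing to a subsequence (compactness of $L(G,d_G)$ and $L(H,d_H)$) we may assume the $d_G$-Busemann functions of $x_n$ converge to some $g\in\overline{(G,d_G)}$ and the $d_H$-Busemann functions of $\pi(x_n)$ converge to some $\eta\in\overline{(H,d_H)}$; by Lemma~\ref{lem:boundary points are at infinity} again, $g\in\p(G,d_G)$. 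The identity
$$ D(x_n,y)-D(x_n,1)=\max\bigl\{\,b^{d_G}_{x_n}(y)+|x_n|_G,\ M\,b^{d_H}_{\pi(x_n)}(\pi(y))+Mm_n\,\bigr\}-\max\bigl\{\,|x_n|_G,\ Mm_n\,\bigr\} $$
then governs $h$, and I would split according to the behaviour (along the subsequence) of the integer $|x_n|_G-Mm_n$: it either tends to $+\infty$, tends to $-\infty$, or is eventually a constant $c$.

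\textbf{The two easy cases.} If $|x_n|_G-Mm_n\to+\infty$, then since $g(y)$ and $M\eta(\pi(y))$ are finite for each fixed $y$, the first term of the maximum eventually dominates on both sides of the identity and $b^D_{x_n}(y)\to g(y)$, i.e. $h=g\in\p(G,d_G)$, which is unbounded below by Lemma~\ref{lem:integer valued geodesic metrics}. Symmetrically, if $|x_n|_G-Mm_n\to-\infty$ then $h=M\,\eta\circ\pi$, and now $Mm_n>|x_n|_G\to\infty$ so $\eta\in\p(H,d_H)$ is unbounded below by Lemma~\ref{lem:integer valued geodesic metrics}, hence so is $h$. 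In both cases property~(5) holds.

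\textbf{The remaining case, which is the main obstacle.} Suppose $|x_n|_G-Mm_n$ is eventually a constant $c$. Then $m_n\to\infty$, so both $g\in\p(G,d_G)$ and $\eta\in\p(H,d_H)$ are unbounded below, and the identity gives $h=\max\{g,\ M\,\eta\circ\pi\}$ up to an additive constant absorbed into one of the two entries. One must now show this maximum is still unbounded below --- and here is exactly where $M>C$ is needed: for arbitrary $g\in\p(G,d_G)$ and $\eta\in\p(H,d_H)$ this can fail, but it holds because $g$ and $\eta$ are \emph{compatible}, arising from the same sequence with $|x_n|_G=Mm_n+c$. The approach I would take: fix a $d_H$-geodesic from $1_H$ to $\pi(x_n)$; for each fixed $k$ its $k$-th vertex stabilises (a finite ball argument, as in Lemma~\ref{lem:integer valued geodesic metrics}) to a point $p_k\in H$ with $\eta(p_k)=-k$, and $p_k$ lies on such a geodesic for all large $n$, so $d_H(\pi(x_n),p_k)=m_n-k$. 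Using the hypothesis that $p_k$ has a lift of $G$-length $\le Ck$ and that $p_k^{-1}\pi(x_n)$ has a lift of $G$-length $\le C(m_n-k)$, I would produce a lift $z$ of $p_k$ with $d_G(x_n,z)\le C(m_n-k)$; since $M>C$, for large $n$ this is $\le M(m_n-k)\le D(x_n,1)$, whence $D(x_n,z)=M(m_n-k)$ and $b^D_{x_n}(z)$ equals $-Mk$ up to a constant. Letting $k\to\infty$ then exhibits points where $h\le -Mk+O(1)\to-\infty$. The delicate point --- and the genuine obstacle --- is to carry this out with a witness $z$ independent of $n$: one uses that $b^D_{x_n}(z)\in\Z$ converges hence is eventually constant at each fixed $z$, chooses the lift of $p_k$ uniformly (e.g. a length-minimising one), and checks that the accumulated error terms in the $d_G$-estimate stay below $D(x_n,1)-Mk$ for all large $n$, the slack being provided precisely by the gap $M-C>0$. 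Combining all three cases shows every $h\in\p(G,D)$ is unbounded below, hence unbounded, which is property~(5); together with (1)--(4) this proves $D$ is a Banach metric. \qed
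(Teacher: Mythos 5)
Your verification of properties (1)--(4) and your treatment of the first two cases (where $|x_n|_G - M m_n$ tends to $+\infty$ or $-\infty$) are correct and match the paper's Cases I and II in spirit. The gap is entirely in your third case, and it is serious.

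First, a conceptual issue. You set out to prove that $h$ is unbounded \emph{below} in the third case, but this is not what the paper shows and is more than is needed. Property (5) only requires $h$ to be unbounded. The paper's Case III is resolved by a \emph{dichotomy}: fixing $r$ and a witness $x$ with $f(x) = -r = -|x|_G$, either $M\,b^H_{\pi(g_n)}(\pi(x)) \le r/2$ eventually (subcase (a)), in which case one builds a point $xy$ with $F(xy) \lesssim -r/4$, or $M\,b^H_{\pi(g_n)}(\pi(x)) > r/2$ infinitely often (subcase (b)), in which case one gets $F(x) \gtrsim r/4$, i.e.\ \emph{unbounded above}. Your write-up ignores the second branch, and it is precisely that branch where the "compatibility" heuristic you invoke fails: the $G$-side functional $g$ and the $H$-side functional $\eta$ need not cooperate, and the paper deals with this by extracting large positive values of $F$ instead of large negative ones.

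Second, the technical plan you sketch for extracting large negative values does not work even in subcase (a). You propose fixing a length-minimising lift $\zeta_k$ of $p_k$ (with $|\zeta_k|_G \le Ck$) and showing $D(x_n,\zeta_k) = M(m_n-k)$ for large $n$. But the reverse triangle inequality gives $d_G(x_n,\zeta_k) \ge |x_n|_G - |\zeta_k|_G \ge M m_n + c - Ck$, and since $M > C$ this exceeds $M(m_n-k) = M m_n - Mk$ by at least $(M-C)k + c$, which is positive for $k$ large. So in fact $D(x_n,\zeta_k) = d_G(x_n,\zeta_k)$, not $M(m_n-k)$, and the intended estimate $b^D_{x_n}(\zeta_k) \approx -Mk$ evaporates: your maximum is governed by the $G$-term, about which you have no lower control at $\zeta_k$. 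The lift $z = x_n w_n^{-1}$ that \emph{does} satisfy $d_G(x_n,z) \le C(m_n-k)$ genuinely depends on $n$, and there is no reason for it to stabilise. The paper circumvents this by not lifting a point on the geodesic from $1_H$ to $\pi(g_n)$ at all; instead it first moves to $x$ (where $f(x) = -r$) and lifts a point $q$ on a geodesic from $\pi(x)$ to $\pi(g_n)$, so that the $G$-side cost is measured from $x$ rather than from $1$ and stays controlled by $C\rho \le 3r/4$, while the $M \cdot d_H$ gain is $M\rho > 3r/4$. This anchoring at $x$ is what makes the witness $xy$ independent of $n$.

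In short: your case split and the first two cases are fine, but the third case as written has both a wrong goal (unbounded below rather than unbounded) and a broken mechanism (a fixed lift $\zeta_k$ has $d_G(x_n,\zeta_k)$ dominating $M(m_n-k)$). You would need to introduce the two subcases (a)/(b) and, in subcase (a), anchor the $H$-side geodesic at $\pi(x)$ rather than at the identity to repair the argument.
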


\begin{proof}
It is easy to verify that $D$ is indeed a metric, which is proper, integer valued, and  left-invariant.

Note that $d_G(x,y) \leq D(x,y) \leq CM d_G(x,y)$, implying that the identity map on $G$ is a quasi-isometry between $(G,D)$ and $(G,d_G)$.
Since $d_G$ is a Cayley metric, $(G,D)$ is quasi-isometric to a Cayley graph.

This verifies the first $4$ properties of a Banach metric from Definition \ref{dfn:Banach metric}.

Denote $1=1_G$ and $|x|_D = D(x,1)$.

To differentiate between the different metrics, we denote $b_x^D(y) = D(x,y) - |x|_D$,
$b_x^G(y) = d_G(x,y) - |x|_G$ and $b_q^H(p) = d_H(q,p) - |q|_H$.

We now prove the fifth property in Definition \ref{dfn:Banach metric}. 
Let $F \in \p (G,D)$.
Choose some sequence $(g_n)_n$ such that $b_{g_n}^D \to F$.
We know by Lemma \ref{lem:boundary points are at infinity} that $|g_n|_D \to \infty$.

We have $3$ cases:

{\bf Case I.}
$$ \limsup_{n \to \infty} \big( |g_n|_G - M \cdot |\pi(g_n)|_H \big)  = \infty $$
In this case, without loss of generality (by passing to a subsequence), we can assume that $|g_n|_G - M \cdot |\pi(g_n)|_H \to \infty$.
Since $|g_n|_G \to \infty$,  and since $d_G$ is assumed to be geodesic, we can use  Lemma \ref{lem:integer valued geodesic metrics},
so that by passing to a further subsequence, we may assume without loss of generality that 
$b_{g_n}^G \to f \in \overline{ (G,d_G) }$, and $f$ is unbounded from below.

Fix some $r \in \N$.  Choose $x$ so that $f(x) \leq -r$.
Since $|g_n|_G - M \cdot |\pi(g_n)|_H \to \infty$, there exists $n(r)$ such that 
$$ |g_n|_G - M \cdot |\pi(g_n)|_H >  |x|_G + M \cdot |\pi(x)|_H $$ 
for all $n \geq n(r)$.
We thus obtain for $n \geq n(r)$ that
\begin{align*}
d_G(x,g_n) & \geq |g_n|_G - |x|_G > M \cdot \big( |\pi(g_n)|_H + |\pi(x)|_H \big) \\
& \geq M \cdot d_H (\pi(g_n), \pi(x) ) .
\end{align*}
So $D(g_n,x) = d_G(g_n,x)$, implying that $b_{g_n}^D(x) \leq b_{g_n}^G(x)$ for all $n \geq n(r)$.
Taking $n \to \infty$ implies that $F(x) \leq f(x) \leq -r$.
This holds for arbitrary $r$, so $F$ is unbounded from below in Case I.

%
%

{\bf Case II.}
$$ \liminf_{n \to \infty} \big( |g_n|_G - M \cdot |\pi(g_n)|_H \big)  = - \infty $$

As in Case I, without loss of generality, we can assume that $M \cdot |\pi(g_n)|_H - |g_n|_G \to \infty$.
Since $d_H$ is assumed to be geodesic, by Lemma \ref{lem:integer valued geodesic metrics} we can assume without loss of generality 
that $b_{\pi(g_n)}^H \to h \in \overline{ (H,d_H)}$ and $h$ is unbounded from below.

Fix $r \in \N$.  Choose $x \in G$ such that $h(\pi(x)) \leq -r$.
Since $M \cdot |\pi(g_n)|_H - |g_n|_G \to \infty$, there exists $n(r)$ such that 
$M \cdot |\pi(g_n)|_H-|g_n|_G>|x|_G+M \cdot |\pi(x)|_H$ for all $n \geq n(r)$.
As in Case I this implies that $D(g_n,x) = M \cdot d_H(\pi(g_n), \pi(x))$ for all $n \geq n(r)$,
which in turn implies that $F(x) \leq M \cdot h(\pi(x)) \leq - M r$.
This holds for arbitrary $r$, so $F$ is unbounded from below in Case II.

{\bf Case III.}
$$  \exists \ R > 0 \qquad  \Big|  |g_n|_G - M \cdot |\pi(g_n)|_H \Big|  \leq R $$
for all large enough $n$.

Since $|g_n|_D \to \infty$ it must be that $|g_n|_G \to \infty$ and $|\pi(g_n)|_H \to \infty$.
As in the first two cases we can pass to a subsequence using Lemma \ref{lem:integer valued geodesic metrics},
to assume without loss of generality that 
$$ b_{g_n}^G \to f \in \overline{(G,d_G)} \AND b_{\pi(g_n)}^H \to h \in \overline{ (H,d_H)} $$
and $f,h$ are unbounded from below.

Now fix some $r \geq 4R$.
Choose $x \in G$ such that $|x|_G = r = -f(x)$, using Lemma \ref{lem:integer valued geodesic metrics}.
By passing to a subsequence we may assume without loss of generality that $b_{g_n}^G(x) = -r$ for all $n$.

We now have two further cases:

{\bf Case III(a).}
$M \cdot b_{\pi(g_n)}^H(\pi(x)) \leq \frac{r}{2}$ for all large enough $n$.

Set $\rho = \lfloor \frac{3r}{4C} \rfloor$ and $z_n = x^{-1} g_n$.
Since $|\pi(z_n)|_H \to \infty$, 
using Lemma \ref{lem:integer valued geodesic metrics},
by passing to a further subsequence we can assume without loss of generality 
that there exists $q \in H$ such that $|q|_H = \rho = - b_{\pi(z_n)}^H(q)$ for all $n$.

Now, take $y \in G$ such that $\pi(y) = q$ and $|y|_G \leq C |q|_H$.
Thus,
$$ d_H(\pi(xy), \pi(g_n)) = d_H(q, \pi(z_n)) = d_H(\pi(x), \pi(g_n)) - \rho . $$
On the one hand we have that
$$ d_G(xy,g_n) \leq d_G(x,g_n) + |y|_G \leq |g_n|_G - r + C \rho \leq |g_n|_D - \tfrac{r}{4}  , $$
while the other hand we have that for all large enough $n$,
\begin{align*}
M \cdot d_H(\pi(xy), \pi(g_n)) & \leq M \cdot d_H( \pi(g_n), \pi(x) ) - M \rho 
\leq M \cdot |\pi(g_n)|_H +  \tfrac{r}{2} - M \rho \\
& \leq |g_n|_D - \tfrac{r}{4} + M .
\end{align*}
Taking a maximum over these two inequalities, and a limit as $n \to \infty$ we arrive at $F(xy) \leq M - \tfrac{r}{4}$.
As this holds for arbitrary $r$, we obtain that $F$ is unbounded from below in Case III(a).

{\bf Case III(b).} $M \cdot b_{\pi(g_n)}^H(\pi(x)) > \frac{r}{2}$ for infinitely many $n$.

In this case we have that for infinitely many $n$,
$$ M  \cdot d_H(\pi(g_n),\pi(x))>M \cdot |\pi(g_n)|_H+\frac{r}{2} \geq |g_n|_G-R+\frac{r}{2} = d_G(g_n,x)-R+\frac{3r}{2}>d_G(g_n,x) $$
which implies that
$D(g_n,x)=M \cdot d_H(\pi(g_n),\pi(x))$, so
$$D(g_n,x)>M \cdot |\pi(g_n)|_H+\frac{r}{2} \geq |g_n|_D-R+\frac{r}{2} \geq |g_n|_D+\frac{r}{4} .$$
 By passing to the limit we get that $F(x) \geq \frac{r}{4}$.
This holds for arbitrary $r$, so $F$ is unbounded from above in Case III(b).

Cases I, II, III(a), III(b) together complete the proof of the fifth property of Definition \ref{dfn:Banach metric}. 
\end{proof}

\begin{example} \label{exm:Cayley graphs and quotients}
One example of metrics on $G$ and $H \cong G/N$ satisfying the assumption of Lemma \ref{lem:BM construction} is as follows.

Fix some finite symmetric generating set $S$ for $G$ and let $d_G = d_S$ be the metric arising from the Cayley graph
with respect to $S$.  Note that since $\pi : G \to H$ is a surjective homomorphism, the set $\pi(S) \subset H$ is a finite symmetric
generating set for $H$.  Let $d_H$ be the metric corresponding to the Cayley graph with respect to $\pi(S)$.

It is easy to verify that $|\pi(x)|_H \leq |x|_G$. Also, for any $q \in H$, write $q = \pi(s_1) \cdots \pi(s_n)$ for $n = |q|_H$ and 
$s_j \in S$.  Then $x = s_1 \cdots s_n$ satisfies that $|x|_G \leq n = |q|_H$ and $\pi(x) = q$.
\end{example}

\begin{lemma} \label{lem:BM from quotient}
%
%
%
Let $G$, $\pi:G \to H$, $d_G,d_H,C,M$ and $D$ be as in Lemma \ref{lem:BM construction}
(with the same assumptions).  Recall that we assume $M>C$.

Then for any $h \in \p (H,d_H)$ there exists $f \in \p (G,D)$ such that $f(x) = M \cdot h(\pi(x))$ for all $x \in G$.

Specifically, $|G.f| \leq |H.h|$.
\end{lemma}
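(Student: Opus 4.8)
The plan is to realize $f$ as a pointwise limit of Busemann functions of suitable lifts of the sequence that defines $h$. Since $h \in \p(H,d_H)$, fix a sequence $(q_n)_n$ in $H$ with $b_{q_n}^H \to h$ pointwise, and note that $|q_n|_H \to \infty$ by Lemma~\ref{lem:boundary points are at infinity}. Using the second hypothesis of Lemma~\ref{lem:BM construction}, choose for each $n$ a lift $g_n \in \pi^{-1}(\{q_n\})$ with $|g_n|_G \le C|q_n|_H$. Since $M > C$, we get $|g_n|_G \le C|q_n|_H < M|q_n|_H$ for all large $n$, so $|g_n|_D = \max\{|g_n|_G,\, M|q_n|_H\} = M|q_n|_H$ eventually.

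The main computation is the following pointwise limit. Fix $x \in G$. Then $b_{g_n}^D(x) = \max\{ d_G(g_n,x),\ M d_H(q_n,\pi(x))\} - M|q_n|_H$, and the first entry of the maximum is eventually dominated by the second: from $d_G(g_n,x) \le |g_n|_G + |x|_G \le C|q_n|_H + |x|_G$ and $d_H(q_n,\pi(x)) \ge |q_n|_H - |\pi(x)|_H$ one gets
$$ M d_H(q_n,\pi(x)) - d_G(g_n,x) \ \ge\ (M-C)|q_n|_H - M|\pi(x)|_H - |x|_G \ \longrightarrow\ \infty . $$
Hence for all $n$ large enough (depending on $x$) we have $b_{g_n}^D(x) = M d_H(q_n,\pi(x)) - M|q_n|_H = M\, b_{q_n}^H(\pi(x))$, which tends to $M\, h(\pi(x))$. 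So the full sequence $b_{g_n}^D$ converges pointwise to the function $f$ defined by $f(x) = M\, h(\pi(x))$; in particular $f \in \overline{(G,D)}$ and $f(1_G) = M\, h(1_H) = 0$.

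It remains to verify that $f$ is a genuine metric-functional, i.e.\ $f \notin \{ b_g^D : g \in G\}$. Here I would invoke that $d_H$, being a Cayley metric, is geodesic, so Lemma~\ref{lem:integer valued geodesic metrics} applies to $h$ and shows $h$ is unbounded from below; since $\pi$ is surjective, $f = M\,(h\circ\pi)$ is then unbounded from below on $G$. On the other hand every Busemann function satisfies $b_g^D(x) = D(g,x) - |g|_D \ge -|g|_D$, hence is bounded from below, so $f$ cannot be one, and therefore $f \in \p(G,D)$. This boundary-versus-interior step is the one mildly delicate point; note one should not try to shortcut it using that $D$ is a Banach metric, since that property only constrains functions already known to lie in $\p(G,D)$.

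Finally, for the orbit statement I would read off from the action formula $(x.f)(y) = f(x^{-1}y) - f(x^{-1})$ that
$$ (x.f)(y) = M\big( h(\pi(x)^{-1}\pi(y)) - h(\pi(x)^{-1}) \big) = M\cdot\big( (\pi(x).h)\circ\pi \big)(y), $$
so $x.f = M\cdot\big((\pi(x).h)\circ\pi\big)$ for every $x \in G$. Since $\pi(G) = H$, this gives $G.f = \{ M\cdot(k\circ\pi) : k \in H.h\}$, which is the image of $H.h$ under $k \mapsto M\cdot(k\circ\pi)$; this map is injective because $\pi$ is surjective, so in fact $|G.f| = |H.h|$, and in particular $|G.f| \le |H.h|$. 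Everything except the third paragraph is routine bookkeeping.
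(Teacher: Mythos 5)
Your proof is correct and follows essentially the same route as the paper's: lift a defining sequence for $h$ via the second hypothesis on $\pi$, use $M>C$ to show the $d_H$ term dominates the maximum eventually, conclude $b_{g_n}^D(x)\to M\,h(\pi(x))$ pointwise, and rule out $f=b_g^D$ by noting $h$ (hence $f$) is unbounded from below. The only cosmetic differences are that you obtain convergence of the full sequence rather than passing to a subsequence, and you observe the sharper conclusion $|G.f|=|H.h|$; both are fine, and your remark that one cannot shortcut the boundary-versus-interior step by invoking the Banach-metric property is exactly right.
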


\begin{proof}
We use the notation $|x|_D$, $|x|_G$, $|q|_H$, $b_x^D$,  $b_x^G$, $b_q^H$ as 
in the proof of Lemma \ref{lem:BM construction}.

Let $h \in \p (H,d_H)$, and choose a sequence $(q_n)_n$ in $H$ such that $b_{q_n}^H \to h$.
Note that by Lemma \ref{lem:boundary points are at infinity}
it must be that $|q_n|_H \to \infty$.
Since $d_H$ is assumed to be a geodesic metric, by Lemma \ref{lem:integer valued geodesic metrics}, $h$ is unbounded from below.

For each $n$ choose $x_n \in G$ such that $\pi(x_n) = q_n$ and $|x_n|_G \leq C|q_n|_H$.  
Since $M \cdot |q_n|_H = M \cdot |\pi(x_n)|_H \leq |x_n|_D$, by passing to a subsequence we may assume without
loss of generality that $b_{x_n}^D \to f \in \overline{(G,D)}$.

We assumed that $M > C$, so $M \cdot |\pi(x_n)|_H > C |\pi(x_n)|_H \geq |x_n|_G$, so that $|x_n|_D = M \cdot |\pi(x_n)|_H$
for all $n$.  Also, for any $x \in G$ we have
\begin{align*}
M \cdot d_H(\pi(x) , \pi(x_n)) & \geq M \cdot |\pi(x_n)|_H - M \cdot |\pi(x)|_H \\
& \geq |x_n|_G + (M-C) |q_n|_H - M \cdot | \pi(x)|_H \\
& \geq d_G(x,x_n) + (M-C) |q_n|_H - M \cdot |\pi(x)|_H - |x|_G 
\end{align*}
for all $n$.  This implies that for all large enough $n$ (as soon as 
$(M-C) |q_n|_H > M \cdot |\pi(x)|_H + |x|_G$),
we have $b_{x_n}^D(x) = M \cdot b_{q_n}^H (\pi(x))$.
Hence $f(x) = M \cdot h(\pi(x))$ for all $x \in G$.

Note that for any $x \in G$ the function $b^D_x$ is bounded below
(by $-|x|_D$).  Since $h$ is unbounded from below, we also have that $f=M \cdot h \circ \pi$ is unbounded from below.  This implies that $f \neq b_x^D$ for any $x \in G$, so that $f \in \p (G,D)$.

This proves the first assertion.

For the second assertion, consider the map $x.f \mapsto \pi(x).h$.
The identity 
$$ x.f(y) = M \cdot (h(\pi(x^{-1} y)) - h(\pi(x^{-1}))) = M \cdot (\pi(x).h) (\pi(y)) . $$
shows that the map is well defined and injective.
Hence $|G.f| \leq |H.h|$.
\end{proof}

\subsection{Virtual homomorphisms}

\label{scn:detection}

Recall the notion of a {\em virtual homomorphism}, Definition \ref{dfn:vir hom}.

Also, throughout the text we use the notation $g^\gamma = \gamma^{-1} g \gamma$ for group elements $g,\gamma$,
as well as $A^g = \{ a^g \ : \ a \in A \}$ for a subset $A$ of a group and an element $g$.

The following lemma is well known, the proof is included for completeness.

\begin{lemma} \label{lem:VH implies VA}
Let $G$ be a finitely generated infinite group and assume that $G$ 
admits a virtual homomorphism. 

Then $G$ has an infinite virtually abelian quotient. 
\end{lemma}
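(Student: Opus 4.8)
The plan is to start from the virtual homomorphism $\vphi : G \to \Z$ and the associated finite index subgroup $H \leq G$ on which $\vphi|_H$ is a nontrivial homomorphism. The first step is to pass from $H$ to a \emph{normal} finite index subgroup: replace $H$ by $N = \bigcap_{g \in G} H^g$, the normal core, which still has finite index since $[G:H] < \infty$ implies only finitely many conjugates. The restriction $\vphi|_N$ is still a homomorphism, but it could a priori be trivial; to avoid this I would instead work with the subgroup $K = \ker(\vphi|_H)$, note $[H:K]=\infty$ is \emph{not} what we want — rather, observe $\vphi(H) \cong \Z^r$ for some $r \geq 1$ (it is a nontrivial f.g. subgroup of $\Z$, hence infinite cyclic), so $H/K \cong \Z$. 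Then take $N$ to be the normal core of $K$ in $G$; since $K$ has finite index in $H$ and $H$ in $G$, $K$ has finite index in $G$, hence so does $N$, and $N \lhd G$.

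The second step is to produce the virtually abelian quotient. Consider $M = \bigcap_{g \in G} (\ker \vphi|_H)^g$ as above — actually the cleanest route is: let $L \lhd G$ be a finite index \emph{normal} subgroup contained in $H$ (e.g. the normal core of $H$), and set $M = [L,L] \cdot (\text{something})$. Hmm — the real point is that $L$, being finite index in $G$ and contained in $H$, has $\vphi|_L$ a homomorphism into $\Z$; it is nontrivial because $L$ has finite index in $H$ and $\vphi(H)$ is infinite, so $\vphi(L)$ is a finite index subgroup of the infinite group $\vphi(H)$, hence infinite. Now let $N = \ker(\vphi|_L) \lhd L$. This $N$ need not be normal in $G$, so replace it by its normal core $N_0 = \bigcap_{g\in G} N^g$ in $G$, which has finite index in $G$ (since $N$ has finite index in $L$ which has finite index in $G$). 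Then $G/N_0$ is a finite index over-group situation: $L/N_0$ is a finite index subgroup of $G/N_0$, and $L/N_0$ surjects onto $L/N \cong \vphi(L) \leq \Z$, but more to the point $L/(L\cap [\text{core}])$...

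Let me restructure the second step more carefully: having found a normal finite index subgroup $L \lhd G$ with $\vphi|_L$ a nontrivial homomorphism $L \to \Z$, let $N = \ker(\vphi|_L)$. Then $N$ is \emph{characteristic-like} enough? No. Instead: the commutator subgroup $[L,L] \leq N$ since $L/N$ is abelian, and $[L,L]$ is characteristic in $L$, hence normal in $G$ (as $L \lhd G$). Set $\bar G = G/[L,L]$. Then $\bar L = L/[L,L]$ is a finite index abelian subgroup of $\bar G$, so $\bar G$ is virtually abelian, and $[L,L] \neq L$ precisely because $L/[L,L]$ surjects onto $L/N = \vphi(L)$ which is infinite. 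Thus $\bar G = G/[L,L]$ is the desired virtually abelian quotient, and it is infinite (it contains the infinite subgroup $\bar L$), completing the proof.

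The main obstacle is purely bookkeeping: ensuring that after the passage to normal core / commutator subgroup the quotient remains \emph{infinite} and the relevant subgroup remains \emph{of finite index}, i.e. that one does not accidentally kill the homomorphism. The key facts used are standard: a finite index subgroup contains a finite index normal subgroup (its core), a finite index subgroup of an infinite group is infinite, the commutator subgroup of $L$ is characteristic in $L$ and hence normal in $G$ when $L \lhd G$, and a subgroup of $\Z$ is either trivial or infinite cyclic. None of these should cause real difficulty, but the order of operations must be chosen so that finiteness of index is preserved at each step.
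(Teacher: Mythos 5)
Your proof is correct, and at its heart it is the same idea as the paper's: reduce to a normal finite-index subgroup and quotient by a normal subgroup containing its commutator subgroup, so that the resulting quotient is virtually abelian, and verify the quotient is infinite because the virtual homomorphism survives. The execution differs in one detail that is actually to your credit. The paper takes $K \lhd H$ with $H/K \cong \Z$ and sets $N = \bigcap_{g \in G} K^g$, then asserts $K^g \lhd H$ and $H/K^g \cong \Z$ for every $g \in G$; as written, these assertions require $H$ to be normal in $G$, which is not part of the hypotheses, so the paper's argument implicitly relies on first replacing $H$ by a normal finite-index subgroup. You make exactly this replacement explicit: you pass to $L \lhd G$, the normal core of $H$, observe $\vphi(L)$ is still infinite (finite index in $\vphi(H) \cong \Z$), and then quotient by $[L,L]$, which is characteristic in $L$ and hence normal in $G$. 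Your observation that $[L,L] \leq \ker(\vphi|_L)$ so that $L/[L,L]$ surjects onto the infinite group $\vphi(L)$ is the right way to guarantee the quotient is infinite, a point the paper leaves implicit as well. One minor stylistic remark: the write-up wanders through a couple of false starts (the normal core of $K$, the passing thought that $\vphi(H)\cong \Z^r$) before settling on the correct line; the final argument in your last paragraph is clean and complete, and the earlier exploration could simply be deleted.
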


\begin{proof}
A virtual homomorphism implies the existence of a finite index normal subgroup $H \lhd G$ such that $[G:H] < \infty$ and
some $K \lhd H$ with $H/K \cong \Z$. 
Consider the normal subgroup $N 
=\cap_{g \in G} K^g$. 
First, $[G/N:H/N]=[G:H]<\infty$, so we only need to show that $H/N$ is abelian. 
Note that for any $g \in G$ we have that $K^g \lhd H$. Also, $H/K^g \cong \Z$ for every $g \in G$. 
Therefore $H'=[H,H] \leq K^g$ for every $g \in G$, and thus also $H' \leq N=\cap_{g \in G} K^g$, which implies that $H/N$ is abelian. 
\end{proof}

We now discuss Cayley graphs, with a focus on virtually abelian groups.

If $S$ is a finite symmetric generating set for a group $G$, we denote by $\Gamma(G,S)$ the Cayley graph
with respect to $S$, by $d_S$ the corresponding metric, and by $|x|_S = d_S(x,1)$.
We also use the notation $\p \Gamma(G,S) = \p (G,d_S)$.

A Cayley graph $\Gamma(G,S)$ provides a metric $d_S$ which is a geodesic metric.
$\Gamma(G,S)$ always contains  geodesic rays (a sequence $\gamma= (\gamma_n)_n$ is a 
geodesic ray if every finite subsequence $(\gamma_k, \gamma_{k+1}, \ldots, \gamma_{k+n})$ is a geodesic).
It is not too difficult to prove (see \eg \cite[Lemma 3.2]{RY23} for a proof) that 
any geodesic ray $\gamma$ converges to a boundary point $b_{\gamma_n} \to \gamma_\infty \in \p (G,d_S)$.
Such points which happen to be limits of geodesics are called \define{Busemann points}, and are the subject of a different discussion,
see \eg references in \cite{RY23, Walsh}.
It is also easy to see that if $\gamma$ is a geodesic ray, then $x.\gamma$ given by $(x.\gamma)_n = x \gamma_n$
is also a geodesic, which converges to $x.\gamma_\infty$.

Walsh provides a characterization of which geodesics converge to the same boundary point:

\begin{proposition}[Proposition 2.1 in \cite{Walsh}]
\label{prop:Walsh geodesics}
Let $\Gamma$ be a Cayley graph.
Two geodesic rays $\alpha, \beta$ converge to the same boundary point $\alpha_\infty = \beta_\infty \in \p \Gamma$,
if and only if there exists a geodesic ray $\gamma$ such that $|\gamma \cap \alpha| = |\gamma \cap \beta| = \infty$.
\end{proposition}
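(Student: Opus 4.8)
The plan is to prove Walsh's characterization of geodesics converging to the same boundary point by unpacking what "convergence to the same metric-functional" means in terms of the Busemann functions, and translating this into a statement about a single ``interpolating'' geodesic. Recall $\alpha_\infty(y) = \lim_n b_{\alpha_n}(y) = \lim_n \big( d(\alpha_n, y) - |\alpha_n| \big)$; along a geodesic this limit exists and equals the ``Busemann cocycle'' of $\alpha$. The first thing I would record is the elementary but crucial fact that along an infinite geodesic $\alpha = (\alpha_n)$ one has, for each fixed $m$, $\alpha_\infty(\alpha_m) = -m = -|\alpha_m|$ (since for $n \ge m$, $d(\alpha_n,\alpha_m) - |\alpha_n| = (n-m) - n = -m$, using that $\alpha$ is a geodesic through $1 = \alpha_0$). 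More generally, for any $y$, the sequence $n \mapsto d(\alpha_n,y) - |\alpha_n|$ is non-increasing (triangle inequality on a geodesic: $d(\alpha_{n+1}, y) \ge d(\alpha_n,y) - 1$ and $|\alpha_{n+1}| = |\alpha_n|+1$), hence the limit exists and $\alpha_\infty(y) \le d(\alpha_m,y) - m$ for every $m$.

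For the easy direction ($\Leftarrow$), suppose $\gamma$ is an infinite geodesic with $|\gamma \cap \alpha| = |\gamma \cap \beta| = \infty$. I would argue that $\alpha_\infty = \gamma_\infty$: pick $y \in G$; for any $m$, since $\gamma$ and $\alpha$ share infinitely many vertices, choose a common vertex $\gamma_k = \alpha_j$ with $k,j$ as large as we like, and use the monotonicity plus the fact that at a shared vertex $v = \gamma_k = \alpha_j$ both Busemann functions are computed from a geodesic through $v$; more precisely $\alpha_\infty(y) = \lim_j (d(\alpha_j,y) - j)$ and along the shared subsequence $\alpha_j = \gamma_k$ this matches $\gamma_\infty(y)$ up to the (bounded, in fact vanishing in the limit) discrepancy $|j - k|$, which one must check is controlled. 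Here is where a little care is needed: being a common vertex doesn't immediately mean $|\alpha_j| = |\gamma_k|$ unless both geodesics start at $1$; I would either assume WLOG all geodesics are based at $1$ (translating by $\alpha_0^{-1}$, using $x.\gamma_\infty$ compatibility noted in the excerpt) or carry the additive constant. Once $\alpha_\infty = \gamma_\infty$ and symmetrically $\beta_\infty = \gamma_\infty$, we get $\alpha_\infty = \beta_\infty$.

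The hard direction ($\Rightarrow$) is the real content and I expect it to be the main obstacle. Given $\alpha_\infty = \beta_\infty =: h$, I need to manufacture a single geodesic $\gamma$ meeting each of $\alpha, \beta$ infinitely often. The natural strategy: the level sets $h^{-1}(-r)$ for $r \in \N$ are nonempty (Lemma~\ref{lem:integer valued geodesic metrics}) and finite (properness); $\alpha_r \in h^{-1}(-r)$ and $\beta_r \in h^{-1}(-r)$ (wait — one must verify $h(\alpha_r) = -r$, which follows from $\alpha_\infty(\alpha_r) = -r$ above and $\alpha_\infty = h$). Now I want to build $\gamma$ by stitching: travel along $\alpha$ out to some far vertex $\alpha_{n_1}$, then find a geodesic segment from $\alpha_{n_1}$ back to a vertex of $\beta$ at a comparable level, travel along $\beta$ out to $\beta_{n_2}$, come back to $\alpha$, and so on — arranging all the while that the concatenation stays geodesic. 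The key lemma making this work is that for points $u,v$ on a common horofunction level that are ``compatible'' (i.e., $h(u) - h(v) = $ (signed distance they ought to be apart)), one can find geodesics realizing $d(u,v) = |h(u) - h(v)| + (\text{small error})$ — this is where $\alpha_\infty = \beta_\infty$ is used quantitatively: $d(\alpha_n, \beta_m) \ge |h(\alpha_n) - h(\beta_m)| = |n - m|$ always, and the equality $\alpha_\infty = \beta_\infty$ forces $d(\alpha_n, \beta_n) = o(n)$, indeed for any fixed $\eps$, $d(\alpha_n,\beta_n) \le \eps n$ eventually — actually one gets $d(\alpha_n, \beta_n) - 2n \to -2\cdot(\text{something})$... the precise estimate is $\limsup_n \big( d(\alpha_n, \beta_n) - n \big) \le 0$ hmm, let me just say: from $\beta_\infty(\alpha_n) = -n$ we get $\lim_m (d(\beta_m, \alpha_n) - m) = -n$, so for large $m$, $d(\beta_m, \alpha_n) \le m - n + 1$, and symmetrically; taking $m = n$ gives $d(\alpha_n, \beta_n) \le 1$ in the limit — no, that's too strong in general. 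This delicate bookkeeping — showing the ``bridges'' between $\alpha$ and $\beta$ can be taken short enough and attached without destroying geodesicity, and that the resulting bi-infinite path is genuinely a geodesic (every finite subpath is shortest) — is the crux. I would handle it by working on a subsequence where $d(\alpha_{n_k}, \beta_{n_k})$ is as small as the estimate allows, using that a path $v_0, v_1, \dots$ is a geodesic iff $d(v_0, v_j) = j$ for all $j$, and carefully verifying this at the splice points using the monotonicity of Busemann differences established at the start. I'd expect to invoke properness (to extract convergent bridge-segments) and the geodesic property of the ambient Cayley graph repeatedly.
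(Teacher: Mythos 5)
First, a contextual note: the paper does not prove this proposition — it is quoted directly as Proposition 2.1 from Walsh \cite{Walsh} and used as a black box. So there is no in-paper proof to compare against; your attempt must be judged on its own.

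Your $\Leftarrow$ direction is correct in substance but over-complicated. You worry about reconciling indices $j$ and $k$ at a common vertex $\alpha_j=\gamma_k$, but this is not needed: if $\gamma\cap\alpha$ is infinite, extract $\alpha_{j_i}=\gamma_{k_i}$ with $j_i,k_i\to\infty$ (forced by properness). Since $b_{\alpha_n}\to\alpha_\infty$ and $b_{\gamma_n}\to\gamma_\infty$ as full sequences, the shared subsequence immediately gives $\alpha_\infty=\lim_i b_{\alpha_{j_i}}=\lim_i b_{\gamma_{k_i}}=\gamma_\infty$, and likewise $\beta_\infty=\gamma_\infty$. No additive constants to carry, no base-point normalization required.

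The $\Rightarrow$ direction has the right strategy (stitch a geodesic that alternates between long arcs of $\alpha$ and $\beta$), but you never land on the correct quantitative lemma, and the estimate you keep circling — controlling $d(\alpha_n,\beta_n)$ for the \emph{same} index $n$ — is a genuine dead end, as you yourself half-suspect. That quantity is not small in general. The fact you actually need is the following. Suppose (WLOG after translating, or carry the constants) $\alpha_0=\beta_0=1$ and $\alpha_\infty=\beta_\infty=:h$, so $h(\alpha_n)=-n$ and $h(\beta_m)=-m$. For each \emph{fixed} $m$, the sequence $n\mapsto b_{\alpha_n}(\beta_m)=d(\alpha_n,\beta_m)-n$ is integer-valued, eventually non-increasing, and converges to $h(\beta_m)=-m$; hence it is \emph{eventually equal} to $-m$. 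That is: there is $N(m)$ with $d(\alpha_n,\beta_m)=n-m$ for all $n\geq N(m)$, and symmetrically with the roles of $\alpha$ and $\beta$ swapped. This is the key lemma, and it is what makes the bridges work: a geodesic segment from $\beta_m$ to $\alpha_n$ (with $n\geq N(m)$) has length $n-m$, which exactly equals the drop $h(\beta_m)-h(\alpha_n)$, so $h$ decreases by exactly $1$ at every step of the bridge. Now build $\gamma$ by choosing $m_1<n_1<n_2<m_2<m_3<n_3<\cdots$ inductively with each threshold condition satisfied, and let $\gamma$ follow $\beta_0,\dots,\beta_{m_1}$, bridge to $\alpha_{n_1}$, follow $\alpha_{n_1},\dots,\alpha_{n_2}$, bridge to $\beta_{m_2}$, and so on. The clean geodesicity criterion is not the ad hoc ``$d(v_0,v_j)=j$'' check you suggest verifying directly, but simply: $h$ drops by exactly $1$ at every edge of $\gamma$, so $|i-j|\geq d(\gamma_i,\gamma_j)\geq|h(\gamma_i)-h(\gamma_j)|=|i-j|$ by $1$-Lipschitzness of $h$, forcing equality. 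Then $\gamma$ meets $\alpha$ along the intervals $\{n_{2k-1},\dots,n_{2k}\}$ and $\beta$ along $\{m_{2k},\dots,m_{2k+1}\}$, infinitely in each case. So: your plan is salvageable, but as written the central estimate is misidentified and the geodesicity check is left hanging.
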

 
Here we slightly abuse notation and denote by 
$\alpha \cap \gamma = \{ \alpha_n \ : \ \exists \ k \in \N \ , \ \alpha_n = \gamma_k \}$
the set of elements which are in both geodesics $\alpha$ and $\gamma$.

\begin{example} \label{exm:Zd boundary}
Consider $G = \Z^d$ with the standard generating set $S = \IP{ \pm e_1, \ldots, \pm e_d }$ (the standard basis and their inverses).
The space $\overline{(G ,d_S)}$ in this case is composed of all functions of the form 
$$ h_{\alpha_1, \ldots , \alpha_d}(z_1 ,\ldots, z_d) = \sum_{j=1}^d h_{\alpha_j}(z_j) $$
where $\alpha_1 , \ldots, \alpha_d \in \Z \cup \{ - \infty, \infty \}$
$$ h_\alpha : \Z \to \Z \qquad h_\alpha(z) = 
\begin{cases}
|\alpha-z| - |\alpha| & \textrm{ if } \alpha \in \Z , \\
-z & \textrm{ if } \alpha = \infty , \\
z & \textrm{ if } \alpha = - \infty 
\end{cases}
$$
For $\vec \alpha \in \Z^d$ we have that $h_{\vec \alpha} = b_{\vec \alpha}$.
So $h_{\alpha_1,\ldots, \alpha_d} \in \p (G,d_S)$ if and only if there exists some $1 \leq j \leq d$ such that 
$\alpha_j \in \{- \infty, \infty\}$.

The action of the group on $\overline{ (G,d_S)}$ is given by $\vec z . h_{\alpha_1,\ldots, \alpha_d}
= h_{\alpha_1 + z_1, \ldots, \alpha_d + z_d}$, with the convention $\infty + z = \infty$ and $-\infty + z = -\infty$.

Note that if $\alpha_j \in \{- \infty , \infty\}$ for all $1 \leq j \leq d$, then $h_{\alpha_1,\ldots, \alpha_d}$ is a fixed point in
the boundary.

See Figure \ref{fig:Z2 boundary}.

Specifically in this case, all boundary points are limits of geodesics.
For example, the point $h_{\infty, \ldots, \infty}$ is obtained from the limit of any geodesic ray whose coordinates 
all tend to $\infty$. \eg $\gamma_{dn+j} = n \vec 1 + e_1 + \cdots + e_j \in \Z^d$ for $n \in \N$ and $0 \leq j < d$, defines 
such a geodesic.

See \cite{develin} for much more on metric-functional boundaries of abelian groups.
\end{example}

\begin{figure}[ht] \label{fig:Z2 boundary}
\includegraphics[width=0.7\textwidth]{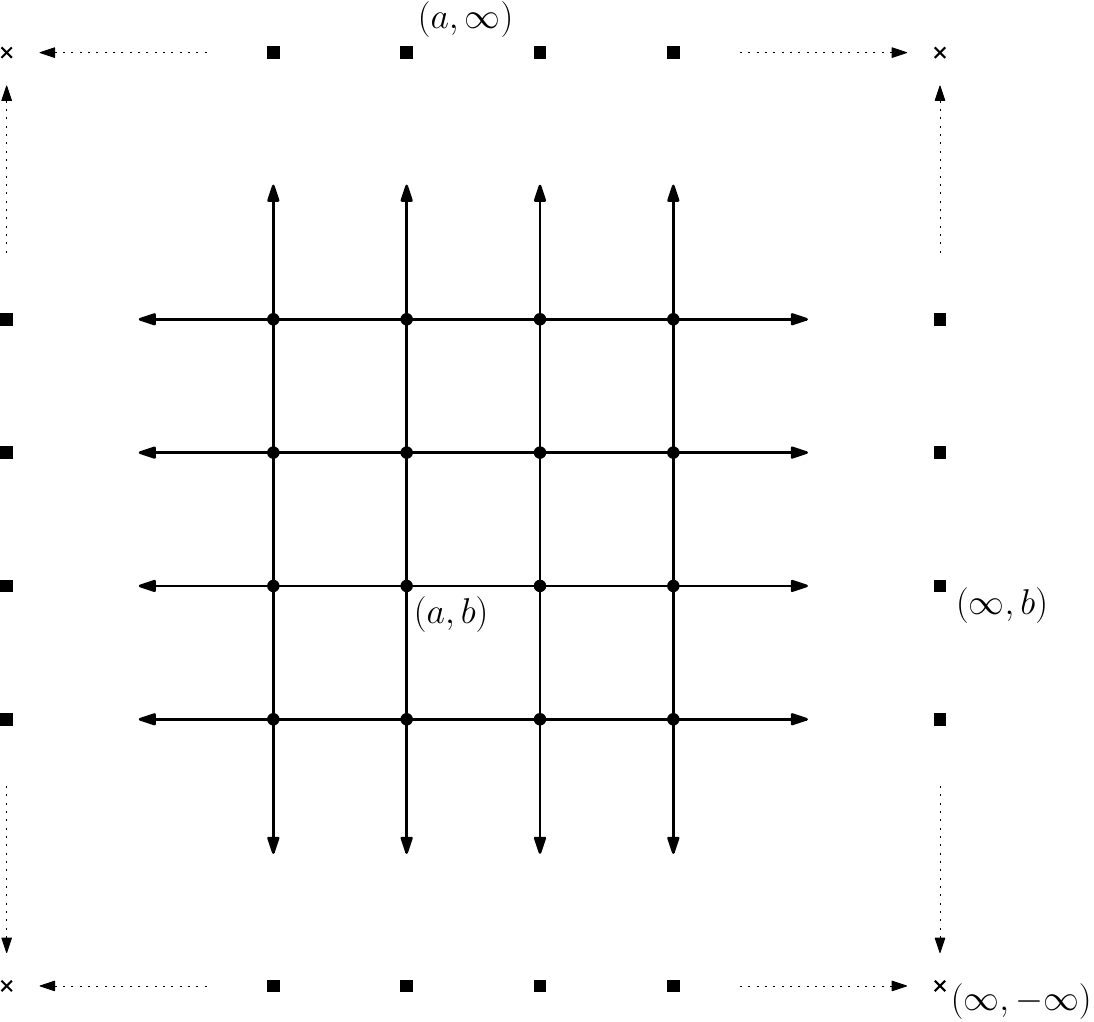}
\caption{The standard Cayley graph of $\Z^2$ with its metric-functional boundary.}
\end{figure}

\begin{lemma} \label{lem:Cayley graphs for VA}
Let $G$ be a finitely generated infinite virtually abelian group, and let $S$ be a finite symmetric generating set for $G$.

There exists a finite index subgroup $N \leq G$, $[G:N] < \infty$, 
such that $N \cong \Z^d$ for some $d \geq 1$, and there exists a finite symmetric generating set $U$ for $N$ such that the following hold:
\begin{itemize}
\item $S \cap U=\emptyset$
\item Denoting $T=S \uplus U$, we have that $|x|_U = |x|_T$ for any $x \in N$.
\end{itemize}
In particular, every geodesic in $\Gamma(N,U)$ is a geodesic in $\Gamma(G,T)$.
\end{lemma}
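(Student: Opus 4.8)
\emph{Overview of the plan.} I would obtain $N$ and $U$ by passing to a finite-index free abelian subgroup of $G$, choosing a basis on which the finite quotient acts ``by coordinate permutations'', and then rescaling that basis by a large factor $M$. The point of the rescaling is that each $U$-edge then moves by exactly $M$ units along a single coordinate axis, while each $S$-edge moves by only $O(1)$ units; a coordinate-by-coordinate count of displacements then forces $|x|_T\ge|x|_U$ for $x\in N$, and the reverse inequality is free because $U\subseteq T$.

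\emph{Setting up $N$ and $U$.} Since $G$ is virtually abelian and infinite, it has a normal finite-index subgroup $A\lhd G$ with $A\cong\Z^d$, $d\ge 1$, and conjugation makes $G$ act on $A$ through the finite group $Q=G/A$, via $\rho\colon Q\to\Aut(A)=GL_d(\Z)$. The first task is to find a finite-index, $Q$-stable (hence normal in $G$) sublattice $N_0\le A$ together with a $\Z$-basis $e_1,\dots,e_d$ of $N_0$ on which $\rho(Q)$ consists of \emph{signed permutation matrices}, i.e.\ $\rho(q)e_i=\pm e_{\sigma_q(i)}$ for all $q\in Q$. (This is the step I expect to be the crux; see the last paragraph.) Then fix a large integer $M$, put $f_i=Me_i$, $U=\{\pm f_1,\dots,\pm f_d\}$ and $N=\langle U\rangle=MN_0$. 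Now $N$ is finite index in $G$, $N\cong\Z^d$, $\{f_i\}$ is a basis, so $|U|=2d$ and $\Gamma(N,U)$ is the standard hypercubic lattice; writing $\|y\|$ for the $\ell^1$-norm of $y\in N$ in the $f$-basis we have $|y|_U=\|y\|$, and $\rho(Q)$ still permutes the $f_i$ up to sign. Finally, $S$ being finite, for $M$ large the $2d$ distinct elements $\pm f_i$ all avoid $S$, so $S\cap U=\emptyset$; set $T=S\cup U$.

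\emph{The distance identity.} Only $|x|_U\le|x|_T$ needs proof. Fix a set section $\sigma\colon Q\to G$ with $\sigma(1)=1$ and write $g=\alpha(g)\sigma(\bar g)$, $\alpha(g)\in A$; let $C$ be a constant (independent of $M$) bounding the $e$-norms of $\rho(q)(\alpha(s))$ for $s\in S$, $q\in Q$, and of the finitely many $2$-cocycle values $\sigma(q)\sigma(q')\sigma(qq')^{-1}$. Let $x\in N$ and let $g_1\cdots g_\ell=x$ be a $T$-geodesic, $\ell=|x|_T$. Expanding the product with the cocycle relation gives, in $A$,
\[
x=\alpha(x)=\sum_{j=1}^{\ell}\rho\!\big(\bar g_1\cdots\bar g_{j-1}\big)\big(\alpha(g_j)\big)+(\text{cocycle terms}),
\]
where a cocycle term appears only at a step with $\bar g_j\ne 1$, hence at an $S$-letter. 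By monomiality each $U$-letter contributes $\rho(\bar q_j)(\pm f_i)=\pm M e_{\sigma_{\bar q_j}(i)}$ — precisely $\pm M$ in one $e$-coordinate — while each $S$-letter together with its possible cocycle term contributes an element of $A$ of $e$-norm $\le C$. Comparing $e$-coordinates: the $e$-coordinates of $x=\sum c_if_i\in MN_0$ are $(Mc_1,\dots,Mc_d)$, so if $m_j$ is the signed number of $U$-letters landing in coordinate $j$ and $|J_S|$ is the number of $S$-letters, then $|Mc_j-Mm_j|\le C|J_S|$, whence $|c_j-m_j|\le C|J_S|/M$. Therefore the number $|J_U|$ of $U$-letters satisfies $|J_U|\ge\sum_j|m_j|\ge\sum_j|c_j|-dC|J_S|/M=\|x\|-dC|J_S|/M$, so
\[
\ell=|J_U|+|J_S|\ \ge\ \|x\|+|J_S|\Big(1-\tfrac{dC}{M}\Big)\ \ge\ \|x\|=|x|_U
\]
as soon as $M>dC$. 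Hence $|x|_U=|x|_T$. The ``in particular'' follows at once: a $\Gamma(N,U)$-geodesic $z_0,\dots,z_n$ satisfies $d_T(z_j,z_k)\le d_U(z_j,z_k)=|k-j|$ and $d_T(z_j,z_k)=|z_j^{-1}z_k|_T=|z_j^{-1}z_k|_U=|k-j|$, so it is a $\Gamma(G,T)$-geodesic.

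\emph{Where the difficulty lies.} Everything rests on the first step: realising the integral representation $\rho\colon Q\to GL_d(\Z)$ by signed permutation matrices on a finite-index sublattice. If $\rho(\bar r)$ could enlarge the $f$-norm of some $f_i$, a $U$-letter could deposit more than $\pm M$ in one coordinate (or spread across several), and then an $S$-conjugation could genuinely shorten a long $U$-word, breaking the identity. So the heart of the proof is this lattice-theoretic ``monomialisation'' of the point group, and deciding exactly when and how it can be carried out is the step I would devote the most care to.
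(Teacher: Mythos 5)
Your plan shares the skeleton of the paper's proof (pass to a finite-index free abelian subgroup, rescale a basis by a large factor to get $U$, show $T$-geodesics of $x\in N$ cannot profitably use $S$-letters), but it stands or falls on the step you yourself flagged as the crux, and that step is false in general. A finite subgroup $Q\le GL_d(\Z)$ need not have any finite-index invariant sublattice on which it acts by signed permutation matrices: the signed permutation group in rank $d$ has order $2^d\,d!$, and conjugating over $\Q$ (or passing to a finite-index sublattice, which gives a rationally equivalent representation) cannot change the group-theoretic structure. For instance, take $G=\Z^2\rtimes\Z/3\Z$ with the order-$3$ automorphism $\begin{pmatrix}0&-1\\1&-1\end{pmatrix}$; the rank-$2$ signed permutation group has order $8$, so it has no element of order $3$, and no monomialising sublattice exists. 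Since your coordinate-by-coordinate count of the $e$-displacements of $U$-letters relies precisely on each $U$-letter contributing $\pm M$ in a single coordinate after any conjugation, the argument collapses for such $G$.

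The paper sidesteps monomialisation entirely. Its key observation is that because $H\cong\Z^d$ is abelian and of finite index, every $x\in H$ satisfies $H\le C_G(x)$ and hence has a \emph{finite conjugation orbit} in $G$. This yields a uniform bound $M$ on $\|x_s^g\|_\infty$ over $s\in R\cup S$ and all $g\in G$ (together with the finitely many $x_{rr'}$), without any change of basis. From there one proves the distortion estimates $\|x\|_\infty\le 2M|x|_S$ and $|x|_B\le 2dM|x|_S$ for $x\in H$, and then chooses $K>2dM$ and $U=\{b_1^{\pm K},\dots,b_d^{\pm K}\}$. The resulting $N=\langle U\rangle$ is finite index but need not be normal in $G$ — note the lemma does not ask for normality, whereas your construction insists on a $Q$-stable (hence normal) sublattice, which is an unnecessary extra constraint. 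Your cocycle-and-counting argument in the second half is a reasonable alternative to the paper's weight inequality, but only after the bound on conjugates is in place; to salvage your proof you should replace the monomialisation step with the finite-conjugation-orbit argument and rerun the count with the resulting uniform $\ell^\infty$-bound (or $\ell^1$-bound, scaled by $d$) on all $G$-conjugates appearing in the expansion.
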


\begin{proof}
Since $G$ is finitely generated, infinite and virtually abelian, 
there exists a finite index normal subgroup $H \cong \mathbb{Z}^d$ in $G$, 
for $d \geq 1$ (see \eg \cite[Theorem 1.5.2]{HFbook} for a proof). 
Let $R$ be a finite set of representatives for the cosets of $H$ such that $1 \in R$; that is $G = \uplus_{r \in R} Hr$.
For every $g \in G$ let $x_g \in H$ and $r_g \in R$ be the unique elements such that $g=x_g r_g$.

Let $b_1, \ldots, b_d \in H$ be elements that are mapped to the standard basis of $\Z^d$ under the implicit 
isomorphism $H \cong \Z^d$.
Note that $B=\{b_1^{\pm 1},\ldots,b_d^{\pm 1}\}$ is a finite symmetric generating set for $H$.

Define:
$$F=\{(x_{sr})^g  \ | \  s \in R \cup S \ , \  r \in R \ , \ g \in G\} . $$
Since $H$ is abelian, for every $x \in H$ it holds that $H \leq C_G(x)=\{g \in G \ | \  x^g=x\}$, so 
$[G:C_G(x)] < \infty$ for every $x \in H$. 
Thus, every $x \in H$ has a finite orbit under conjugation, by the orbit-stabilizer theorem.
Hence, $F$ is a finite set (because $R,S$ are finite).
So we can define
$$ M=\max \{|x|_B \ | \  x \in F\}  . $$

{\bf Step I.}
First, we prove by induction on $n$ that for any $r_1, \ldots, r_n \in R$ such that $r_1 \cdots r_n \in H$ it holds that 
\begin{align}
\label{eqn:product of rs}
|  r_1 \cdots r_n |_B & \leq M n  .
\end{align}

For $n=1$ this is obvious as $r_1 \in H \cap R$ implies that $r_1=1$ so $|r_1|_B = 0$.

Now, for $n \geq 1$,
if $r_1 \cdots r_{n+1} \in H$ then we can write $y = x_{r_1 r_2}$ and $\rho = r_{r_1 r_2}$ 
(recall the decomposition $g=x_g r_g$ from above),
so that $\rho r_3 \cdots r_{n+1} = y^{-1} r_1 \cdots r_{n+1} \in H$.  By induction, we conclude that
$$ | r_1 \cdots r_{n+1}  |_B \leq | y  |_B + |  \rho r_3 \cdots r_{n+1}  |_B \leq M + Mn = M(n+1) $$
proving \eqref{eqn:product of rs}.

{\bf Step II.}
Next, we prove that for every $x \in H$ we have
\begin{align}
\label{eqn:infty < 2M S}
| x  |_B & \leq 2M \cdot |x|_S .
\end{align}
Indeed, let $x \in H$ and write $x=s_1 \cdots s_n$ for $n=|x|_S$ and $s_1,\ldots, s_n \in S$.
Denote $y_j=x_{s_j}$ and $r_j=r_{s_j}$ for every $1 \leq j \leq n$ (recall the decomposition $g=x_g r_g$ from above). 
Denote $q_1=1$,  $q_{j+1}= (r_1 \cdots r_j)^{-1}$ for $1 \leq j \leq n$, $z=x_{(q_{n+1})^{-1}}$ and $r=r_{(q_{n+1})^{-1} }$. 
With this notation we have:
$$x=y_1r_1 \cdots y_nr_n=(y_1)^{q_1} \cdots (y_n)^{q_n} \cdot (q_{n+1})^{-1} =(y_1)^{q_1} \cdots (y_n)^{q_n} \cdot z \cdot r . $$
Since $x \in H$, it follows that $r=1$, so
\begin{align}
\label{eqn:bound on xS}
| x |_B & \leq \sum_{j=1}^n |(y_j)^{q_j } |_B + |  z  |_B  \leq Mn+ | z  |_B .
\end{align}
Since $r=1$, we have that $H \ni z = zr = (q_{n+1})^{-1} = r_1 \cdots r_n$.
Using \eqref{eqn:product of rs} 
$$| z |_B = |  r_1 \cdots r_n  |_B  \leq M n = M |x|_S . $$
Together with \eqref{eqn:bound on xS}, this completes a proof of \eqref{eqn:infty < 2M S}.

{\bf Step III.} 
Now fix some integer $K>2M$ and let $N$ be the subgroup generated by $\tilde U = \{ b^K \ : \ b \in B \}$.
Since by \eqref{eqn:bound on xS} 
we have that $K |x|_{\tilde U} = |x|_B \leq 2M |x|_S$ for all $x \in N$, we obtain that $S \cap N \subset \{1\}$.

Also, note that $N$ is a normal subgroup, $N \lhd G$.
Indeed, for any $u \in  \tilde U$ there is some $b \in B$ such that $u = b^K$.
Also, for any $g \in G$ we know that $g^{-1} b g = b_1^{z_1} \cdots b_d^{z_d}$ for some integers 
$z_1, \ldots, z_d \in \Z$ (because $H$ is normal in $G$).
Therefore, using that $H$ is abelian,
$$ g^{-1} u g =   (b_1^{z_1} \cdots b_d^{z_d})^K = (b_1^K)^{z_1} \cdots (b_d^K)^{z_d} \in N . $$
This proves that $N \lhd G$.

Now define the symmetric generating set $U := \{ g^{-1} b^K g \ : \ b \in B \ , \ g \in G \}$.
As mentioned above, each orbit $\{ b^g \ : \ g \in G\}$  
is finite, so the generating set $U$ is finite, and thus induces a Cayley graph $\Gamma(N,U)$.

Define $T = S \uplus U$.
We now prove by induction on $n$ that for any $t_1, \ldots, t_n \in T$ such that $x=t_1 \cdots t_n \in N$
we have $|x|_U \leq n$.

The base case, $n=1$, is the case where $x=t_1 \in T \cap N$.
Since $S \cap N \subset \{1\}$, it must be that $x \in U \cup \{1\}$,
which implies that $|x|_U \leq 1$ as required.

For the induction step, assume that $n > 1$ and $t_1, \ldots, t_n \in T$ are such that $x = t_1 \cdots t_n \in N$.
We have a few cases.

{\bf Case I.} $t_1 , \ldots , t_n \in S$. In this case, using \eqref{eqn:bound on xS} we have that 
$$ K |x|_U \leq K |x|_{\tilde U} = |x|_B \leq 2M |x|_S \leq 2Mn , $$
implying that $|x|_U < n$.

{\bf Case II.} $t_n \in U$. In this case, $y=t_1 \cdots t_{n-1} \in N$, and by induction,
$|x|_U \leq |y|_U + |t_n|_U \leq n$.

{\bf Case III.} $t_1 \in U$.  This case is similar to the previous one.  We have that $y = t_2 \cdots t_n \in N$
so that $|x|_U \leq |t_1|_U + |y|_U \leq n$ by induction.

{\bf Case IV.} $t_1, t_n \in S$ and there exists some $2 \leq j \leq n-1$ for which $t_j \in U$.
In this case we can choose $2 \leq \ell \leq n-1$ such that $t_\ell \in U$ and $t_j \in S$ for all $\ell < j \leq n$.
Set $g= t_{\ell+1} \cdots t_n$ and $u = t_\ell$ and note that 
$$ x = t_1 \cdots t_n = t_1 \cdots t_{\ell-1} t_{\ell+1} \cdots t_n \cdot g^{-1} u g . $$
So $y = t_1 \cdots t_{\ell-1} t_{\ell+1} \cdots t_n \in N$ and by induction
$|x|_U \leq |y|_U + |g^{-1} u g |_U \leq n$, where we have used that $g^{-1} u g \in U$ as well.

This completes the induction step.

To complete the proof of the lemma, note that we have shown, in particular, that for any $x \in N$ 
we have $|x|_U \leq |x|_T$.
Since $U \subset T$, this implies that $|x|_U = |x|_T$ for any $x \in N$.
Specifically, any geodesic in $\Gamma(N,U)$ is also a geodesic in $\Gamma(G,T)$.
\end{proof}

\begin{corollary} \label{cor:vir Abelian fin orbit}
Let $G$ be a finitely generated infinite virtually abelian group.

There exists a Cayley graph $\Gamma(G,T)$ of $G$ with a finite orbit in $\partial \Gamma(G,T)$.
\end{corollary}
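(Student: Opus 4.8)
Using the generating set $T = S\uplus U$ supplied by Lemma \ref{lem:Cayley graphs for VA}, I will exhibit a single metric-functional in $\p\Gamma(G,T)$ whose $G$-orbit is finite: the limit of the ``main diagonal'' geodesic inside the finite-index lattice $N\cong\Z^d$ (inside $\Gamma(N,U)$ this is exactly the fixed point $h_{\infty,\dots,\infty}$ of Example \ref{exm:Zd boundary}). The role of Lemma \ref{lem:Cayley graphs for VA} is precisely that this geodesic, living in the hypercubic lattice $\Gamma(N,U)$, remains a geodesic in $\Gamma(G,T)$, so its limit is a genuine boundary point of $\Gamma(G,T)$, and the finite index of $N$ will then force the orbit to be finite.

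Concretely, apply Lemma \ref{lem:Cayley graphs for VA} and identify $N$ with $\Z^d$ so that $U$ corresponds to $\{\pm e_1,\dots,\pm e_d\}$ and $\Gamma(N,U)$ is the standard lattice. Let $\gamma$ be the diagonal geodesic from Example \ref{exm:Zd boundary}, $\gamma_{dk+j}=k\vec 1+e_1+\cdots+e_j$ for $k\in\N$, $0\le j<d$; it is a monotone lattice path, hence a geodesic of $\Gamma(N,U)$, hence (by the last clause of Lemma \ref{lem:Cayley graphs for VA}) an infinite geodesic of $\Gamma(G,T)$. By the cited fact that infinite geodesics of a Cayley graph converge to boundary points, $b_{\gamma_n}\to\gamma_\infty$ for some $\gamma_\infty\in\p\Gamma(G,T)$.

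The crux is to show $N\subseteq\stab_G(\gamma_\infty)$. Fix $n\in N$. Then $n.\gamma$ is again a monotone diagonal lattice path in $\Gamma(N,U)$, stays within bounded distance of $\gamma$, and converges to $n.\gamma_\infty=(n.\gamma)_\infty$ (using $x.b_y=b_{xy}$ and continuity of the action). I build a ``weaving'' monotone lattice path $\delta$ starting at the origin that alternately runs, via positive unit steps, to the next not-yet-visited vertex of $\gamma$, then far enough along $n.\gamma$ to dominate coordinatewise the vertex just reached, and repeats indefinitely; since both $\gamma$ and $n.\gamma$ increase in every coordinate without bound, each leg can be taken coordinatewise non-decreasing, so $\delta$ is monotone, hence an infinite geodesic of $\Gamma(N,U)$ and of $\Gamma(G,T)$, and by construction $|\delta\cap\gamma|=|\delta\cap n.\gamma|=\infty$. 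Walsh's Proposition \ref{prop:Walsh geodesics} then gives $\gamma_\infty=(n.\gamma)_\infty=n.\gamma_\infty$, so $N\subseteq\stab_G(\gamma_\infty)$. Finally $\stab_G(\gamma_\infty)$ is a subgroup of $G$ containing $N$, whence $[G:\stab_G(\gamma_\infty)]\le[G:N]<\infty$, and by orbit--stabilizer $|G.\gamma_\infty|=[G:\stab_G(\gamma_\infty)]<\infty$; thus $\gamma_\infty\in\p\Gamma(G,T)$ has a finite orbit.

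The step I expect to be the main obstacle is the construction and verification of the weaving geodesic $\delta$: one must check the interleaving can always be carried out with only positive coordinate steps (this is where the fact that the two diagonals grow at equal rate in all coordinates, at bounded mutual distance, is used), so that $\delta$ is genuinely a geodesic of the hypercubic lattice, and that it meets each of $\gamma$ and $n.\gamma$ in infinitely many distinct vertices, so that Walsh's criterion applies.
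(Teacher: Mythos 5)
Your argument is correct and follows essentially the paper's own proof: apply Lemma \ref{lem:Cayley graphs for VA}, take a geodesic of $\Gamma(N,U)$ converging to an $N$-fixed boundary point, and use Walsh's criterion (Proposition \ref{prop:Walsh geodesics}) in $\Gamma(G,T)$ to conclude that $N$ lies in the stabilizer of the limit, hence the orbit is finite by orbit--stabilizer. The only (minor) departure is that the paper produces the bridging geodesic by invoking the converse direction of Proposition \ref{prop:Walsh geodesics} inside $\Gamma(N,U)$, whereas you construct it by hand as a weaving monotone lattice path; both are fine, and your construction indeed goes through because every coordinate of $\gamma$ and of $n.\gamma$ tends to $+\infty$, so each leg can be taken coordinatewise non-decreasing.
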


\begin{proof}
By Lemma \ref{lem:Cayley graphs for VA}, we can choose $N \leq G$ of finite index $[G:N] < \infty$ such that $N \cong \Z^d$, 
and we can also find $U \subseteq T$ two finite symmetric sets such that $G=\langle T \rangle$, $N=\langle U \rangle$ 
and such that every geodesic in $\Gamma(N,U)$ is also a geodesic in $\Gamma(G,T)$.
It is a result of Develin \cite{develin} that 
$\partial \Gamma(N,U)$ contains a Busemann point fixed under the action of $N$. (See Theorem 3 in \cite{develin}.  The invariant Busemann point arises from a geodesic corresponding to a vertex, 
or a $0$-dimensional face, of the appropriate polytope.)

We use $\gamma$ to denote a geodesic in $\Gamma(N,U)$ which converges to $h \in \p \Gamma(N,U)$ 
such that $x.h=h$ for all $x \in N$.

By Lemma \ref{lem:Cayley graphs for VA}, $\gamma$ is also a geodesic in $\Gamma(G,T)$,
and thus in the space $\overline{ (G,d_T) }$, $\gamma$ converges to some point $f \in \p \Gamma(G,T)$.

Fix any $x \in N$. 
We know that $x.h = h$, implying that the geodesic $x.\gamma$ converges to $h$ as well.
By Proposition \ref{prop:Walsh geodesics}, there exists some infinite geodesic $\alpha$ in $\Gamma(N,U)$ 
such that $|\alpha \cap \gamma| = |\alpha \cap x.\gamma| = \infty$.
By Lemma \ref{lem:Cayley graphs for VA}, $\alpha$ is also a geodesic in $\Gamma(G,T)$.
Using Proposition \ref{prop:Walsh geodesics} in the graph $\Gamma(G,T)$ we obtain that in
the space $\overline{ (G,d_T) }$, the geodesics $\gamma, x.\gamma$ converge to the same boundary point $f = x.f \in \p \Gamma(G,T)$.

Since this holds for any $x \in N$ we get that $N$ is contained in the stabilizer of $f$,
\ie $N \lhd \{ x \in G \ : \ x.f=f \}$.  Since $[G:N] < \infty$, we get that this stabilizer has finite index,
and thus, $|G.f| < \infty$.
This is the required finite orbit.
\end{proof}

We now move to prove Theorem \ref{thm:detection}, stating that $G$ admits a virtual homomorphism
if and only if there exists a Banach metric on $G$ with a finite orbit in the boundary.

\begin{proof}[Proof of Theorem \ref{thm:detection}]
As mentioned after Definition \ref{dfn:vir hom}, if $h \in \p (G,d)$ has a finite orbit, then $h$ is a virtual homomorphism.

For the other direction, we assume that $G$ admits some virtual homomorphism (\ie $G$ is virtually indicable).
By Lemma \ref{lem:VH implies VA}, there exists a surjective homomorphism $\pi : G \to H$ such that $H$ is virtually abelian.

Fix some Cayley graph metric $d_G$ on $G$, with $S$ the corresponding finite symmetric generating set.
Note that $\pi(S)$ is a finite symmetric generating set for $H$.
By Corollary \ref{cor:vir Abelian fin orbit}, there exists a finite symmetric generating set $T$ for the group $H$
such that $\p \Gamma(H,T)$ contains a finite orbit.  That is, for some $h \in \p \Gamma (H,T)$ we have $|H.h| < \infty$.
Let $d_H = d_T$.
Since $(H,d_H)$ and $(H,d_{\pi(S)})$ are two Cayley graphs of the same group,
there is some $C>0$ such that $d_{\pi(S)}(p,q) \leq C d_H(p,q)$ for all $p,q \in H$.
As explained in Example \ref{exm:Cayley graphs and quotients}, this shows that $G, H, \pi, d_G, d_H$ satisfy the assumptions
of Lemmas \ref{lem:BM construction} and \ref{lem:BM from quotient}, with this constant $C>0$.
Hence, by taking some integer $M> C$, using Lemmas \ref{lem:BM construction} and \ref{lem:BM from quotient},
we have a Banach metric $D$ on $G$ and some $f \in \p (G,D)$ such that $f(x) = M \cdot h(\pi(x))$ for all $x \in G$.
Specifically, $|G.f| \leq |H.h| < \infty$, providing us with the required finite orbit.
\end{proof}

\section{Finite index subgroups}

\label{scn:finite index}

In this section we prove Theorem  \ref{thm:finite index BM}, stating that a Banach metric induces 
a Banach metric on a finite index subgroup.

\begin{proof}[Proof of Theorem \ref{thm:finite index BM}]
Let $d_G$ be a Banach metric on a group $G$.
Let $H \leq G$ be a subgroup of finite index $[G:H] < \infty$.
Let $d(x,y) = d_G(x,y)$ for all $x,y \in H$, which is the metric on $H$ as a subspace of $G$.

The first three properties in Definition \ref{dfn:Banach metric} are immediate to verify.

The fourth property follows from the fact that quasi-isometry is an equivalence relation, 
and the fact that any Cayley graph on $H$ is quasi-isometric to a Cayley graph of $G$.
The identity map on $H$ into $G$ provides a quasi-isometry from $(H,d)$ to $(G,d_G)$.
This implies that $(H,d)$ is quasi-isometric to a Cayley graph of $G$, and thus to a Cayley graph of $H$.

To prove the fifth property in Definition \ref{dfn:Banach metric}, 
choose any $h \ni \p (H,d)$.
Then $b_{x_n} \big|_H \to h$ for some sequence $x_n \in H$.
By perhaps passing to a subsequence, we may assume without loss of generality that $b_{x_n} \to f \in \overline{ (G,d_G) }$,
and we find that $f \big|_H = h$.

If $f \in \p (G,d_G)$ then it is unbounded by the fifth property in Definition \ref{dfn:Banach metric}.
Since $d_G$ is proper, any $b_x$ is also unbounded.
Hence $f$ is unbounded in any case.
%
So, we can find a sequence $(g_n)_n$ in $G$ such that 
$|f(g_n) | \to \infty$.  Since $[G:H] < \infty$ the sequence $(g_n)_n$ must be in some coset of $H$ infinitely many times,
implying that by passing to a subsequence we may assume that $g_n = y_n r$ for $y_n \in H$ and fixed $r \in G$.
By the Lipschitz property we have that $|f(y_n) - f(g_n)| \leq |r|$ for all $n$, so that $|h(y_n)| = |f(y_n)| \geq |f(g_n)| - |r| \to \infty$,
implying that $h$ is unbounded.
%
\end{proof}

\section{No detection in hyperbolic groups}

\label{scn:no detection}

In this section we prove Theorem \ref{thm:hyperbolic no detection}, stating that in 
any Cayley graph of a non-virtually cyclic Gromov hyperbolic group, there does not exists a finite orbit in the boundary.
A good source for the definitions and properties stated in this section is \cite[Chapter 11]{DK18}.
(We will work in a less general framework, as we only require some basic properties for our purposes.)

Let $G$ be a finitely generated group, and fix $d=d_S$ a Cayley metric on $G$.

For $x,y \in G$ define the {\em Gromov product} $(x,y) : = \tfrac12 ( |x| + |y| - d(x,y)  )$.

Let $\Omega_\infty \subset G^\N$ be the set of all sequences $(x_n)_n$ such that
$$ \lim_{n,m \to \infty} (x_n,x_m) = \infty $$
Define a relation $\sim$ on $\Omega_\infty$ by declaring $(x_n)_n \sim (y_n)_n$ if 
$$ \lim_{n \to \infty} (x_n,y_n) = \infty $$
Two sequences $\mbf{x}, \mbf{y} \in \Omega_\infty$ are \define{equivalent} if there is a finite 
sequence $\mbf{x}^{(0)}, \mbf{x}^{(1)}, \ldots, \mbf{x}^{(n)} \in \Omega_\infty$ such that $\mbf{x}^{(j)} \sim \mbf{x}^{(j+1)}$ 
for all $j$ and $\mbf{x}^{(0)} = \mbf{x}$ and $\mbf{x}^{(n)} = \mbf{y}$.
Since the relation $\sim$ is symmetric and reflexive, the above defines an equivalence relation on $\Omega_\infty$.
For a sequence $\mbf{x} = (x_n)_n \in \Omega_\infty$, we write $[\mbf{x}]$ for the equivalence class of $\mbf{x}$.
We also write $[\mbf{x}] = \lim x_n$.

The set of equivalence classes is denoted by $\p_{\Gr} G = \{ [\mbf{x}] \ : \ \mbf{x} \in \Omega_\infty \}$,
and is called the {\em Gromov boundary}. (This boundary is invariant under quasi-isometries, see \eg \cite{DK18}, 
therefore we do not denote a dependence on the Cayley metric.  However, we do not require this property for our purposes.)

It is a simple exercise to prove that if $\gamma$ is an infinite geodesic then $\gamma \in \Omega_\infty$.
The next lemma implies that, in fact,  any sequence in $\Omega_\infty$ is equivalent to some geodesic.

\begin{lemma} \label{lem:geodesics in Gromov boundary}
For any $\mbf{x} \in \Omega_\infty$ there exists an infinite geodesic $\gamma$ such that $[\gamma] = [\mbf{x}]$.
\end{lemma}

\begin{proof}
Let $\mbf{x} = (x_n)_n$ be such that $|x_n| \to \infty$.
For each $n$ let $\gamma^{(n)}$ be a finite geodesic from $1$ to $x_n$.
Let $J_0 = \N$ and $\alpha_0=1$.  Note that for all $n \in J_0$ we have $\gamma^{(n)}_0 = \alpha_0$.
There exists $|\alpha_1| = 1$ and an infinite subset $J_1 \subset J_0$, $|J_1|=\infty$, 
such that for all $n \in J_1$ we have $\gamma^{(n)}_1 = \alpha_1$.
Continuing inductively, we find a decreasing sequence of infinite subsets $J_{k+1} \subset J_k \subset \cdots \subset J_0 = \N$,
and an infinite geodesic $\alpha = (\alpha_n)_n$, such that for all $k \in \N$ and all $n \in J_k$ it holds that $\gamma^{(n)}_j = \alpha_j$
for any $0 \leq j \leq k$.
(All this is basically just compactness of $\{0,1\}^G$ with pointwise convergence.)

For each $k \in \N$ write $J_k = \{ m_{k,0} < m_{k,1} < m_{k,2} < \cdots \}$.
Define $y_k : = x_{m_{k,k}}$, so $\mbf{y} = (y_n)_n$ is a subsequence of $(x_n)_n$.
It is a simple exercise to show that since $\mbf{y}$ is a subsequence, $[\mbf{y} ] = [\mbf{x}]$.

Now, consider $n = m_{k,k} \in J_k$. 
Since $\gamma^{(n)}$ is a geodesic from $1$ to $y_k$ through $\alpha_k$, we have that $d(y_k, \alpha_k) = |y_k| - |\alpha_k|$.
Thus, $(y_k,\alpha_k) = |\alpha_k| \to \infty$ as $k \to \infty$.
This implies that $\mbf{y} \sim \alpha$, so that $[\alpha] = [\mbf{x}]$.
\end{proof}

$G$ is called \define{Gromov hyperbolic} if there exists $\delta$ such that for all $x,y,z \in G$ we have 
$$ (x,y) \geq \min \{ (x,z) , (z,y) \} - \delta . $$

We now prove Theorem \ref{thm:hyperbolic no detection}, stating that the metric functional boundary of a Cayley graph
cannot detect virtual homorphisms in Gromov hyperbolic groups.

\begin{proof}[Proof of Theorem \ref{thm:hyperbolic no detection}]
Let $d$ be a Cayley metric on $G$.

Note that $G$ acts on $\p_{\Gr} G$ by $g.[(x_n)_n] = [(gx_n)_n]$.
One easily verifies that this is a well defined action of $G$ (because $(gx,gy) \geq (x,y) - |g|$).

Define a map from $\p (G,d)$ to $\p_{\Gr} G$ as follows:
Given $h \in \p (G,d)$ choose some sequence $(x_n)_n$ such that $b_{x_n} \to h$, and then map $h \mapsto [(x_n)_n]$.
For any $y \in G$,
$$ (x_n,x_m) \geq - \tfrac12 ( b_{x_n}(y) + b_{x_m}(y) ) \to - h(y)  $$
as $n,m \to \infty$.
Since $(G,d)$ is geodesic, by Lemma \ref{lem:integer valued geodesic metrics}, for any $r>0$ there exists $y_r \in G$ with 
$h(y_r) = - |y_r| = -r$.  This shows that $b_{x_n} \to h$ implies that $(x_n)_n \in \Omega_\infty$.

Moreover, similarly to the above, if $b_{x_n} \to h$ and $b_{z_n} \to h$, then for any $y \in G$ we have 
$$ (x_n,z_n) \geq - \tfrac12 (b_{x_n}(y) + b_{z_n}(y)) \to -h(y) $$
as $n \to \infty$.
Since $-h(y_r) \to \infty$ as $r \to \infty$, we conclude that $(x_n)_n \sim (z_n)_n$.  Thus, the map $h \mapsto [(x_n)_n]$ is well defined.

Equivariance of this map is easy to verify.

The map is surjective by Lemma \ref{lem:geodesics in Gromov boundary}, since any geodesic $\gamma$ 
converges in $\overline{(G,d)}$ to some metric-functional in $\p (G,d)$ (see \eg \cite{develin}).

This completes a proof that $\p (G,d)$ can be mapped equivariantly and surjectively onto $\p_{\Gr} G$.
(Such a surjective map is well known, see \eg \cite{WW05}.  However,
in many texts only the hyperbolic case is considered, and also the group action is not mentioned.  Since the proof is short, we included all the details here.)

It is well known that if $G$ is Gromov hyperbolic then any orbit in $\p_{\Gr} G$ is dense ($G$ acts {\em minimally} on $\p_{\Gr} G$),
see \eg \cite[Proposition 4.2]{KB02}. 
Also, the only case where the Gromov boundary $\p_{\Gr} G$ is finite is if $G$ is virtually cyclic
(see \eg \cite[Lemma 11.130]{DK18}).

So, if $G$ is a non-virtually cyclic Gromov hyperbolic group, the Gromov boundary has no finite orbits,
implying (by the surjective equivariant map from $\p (G,d)$ onto the Gromov boundary) that there are no finite orbits in 
$\p (G,d)$.
\end{proof}

\end{document}